\definecolor{refkey}{gray}{.75}
\definecolor{labelkey}{gray}{.50}
\newtheorem{theorem}{Theorem}[section]
\newtheorem{lemma}[theorem]{Lemma}
\newtheorem{proposition}[theorem]{Proposition}
\theoremstyle{definition}
\def\FF{\mathscr{F}}
\def\MM{\mathscr{M}}
\def\Re{{\rm Re}}
\def\Im{{\rm Im}}
\theoremstyle{remark}
\numberwithin{equation}{section}
\let\epsilon=\varepsilon
\def\ep{{\varepsilon}}
\def\R{\mathbb R}
\def\E{{\rm e}^{t(J_0(\xi)-1)}}
\def\dirac{\delta_{0}}
\let\phi=\varphi
\newtheorem{ex}[theorem]{\textbf{Example}}
\newtheorem{assumption}[theorem]{\textbf{Assumption}}
\newtheorem{rem}[theorem]{\textbf{Remark}}
\title[Blow up in a half-space]{Blow-up phenomena for
positive solutions of  \\ 
semilinear diffusion equations in a half-space: \\ 
the influence of the dispersion kernel}
\date{}
\begin{document}

\maketitle

\begin{center}{
	\large\bf Matthieu Alfaro\footnote{
		IMAG, Univ. Montpellier, CNRS, Montpellier, France \& BioSP, INRA, 84914, Avignon, France.
		e-mail: {\tt matthieu.alfaro@umontpellier.fr}
	} 
	and 
	Otared Kavian\footnote{
		Université Paris-Saclay (site de Versailles);
		45 avenue des Etats-Unis; 
		78035 Versailles cedex; France.
		e-mail: {\tt otared.kavian@uvsq.fr} 
	}
	}\\

\end{center}



\vspace{10pt}

\begin{abstract} We consider the semilinear diffusion equation $\partial_{t} u = Au +  |u|^{\alpha}u$ in the half-space $\R_{+}^N := \R^{N-1}\times(0,+\infty)$, where $A$ is a linear diffusion operator, which may be the classical Laplace operator, or a fractional Laplace operator, or an appropriate non regularizing nonlocal operator. The equation is supplemented with an initial data $u(0,x) = u_{0}(x)$ which is nonnegative in the half-space $\R_{+}^N$, and the Dirichlet boundary condition $u(t,x',0) = 0$ for $x'\in \R^{N-1}$. 

We prove that if the symbol of the operator $A$ is of order $a|\xi|^\beta$ near the origin $\xi = 0$, for some $\beta \in (0,2]$, then any positive solution of the semilinear diffusion equation blows up in finite time whenever $0 < \alpha \leq \beta/(N+1)$. On the other hand, we prove existence of positive global solutions of the semilinear diffusion equation in a half-space when $\alpha > \beta/(N+1)$. Notice that in the case of the half-space, the exponent $\beta/(N+1)$ is smaller than the so-called Fujita exponent $\beta/N$ in $\R^N$.  

As a consequence we can also solve the blow-up issue for solutions of the above mentioned semilinear diffusion equation in the whole of $\R^N$, which are odd in the $x_{N}$ direction (and thus sign changing).

\medskip
\noindent{\bf Key Words:} sign changing solutions, half-space, blow-up solutions, global solutions, Fujita exponent, nonlocal diffusion, dispersal tails.

\medskip
\noindent{\bf AMS Subject Classifications:} 35B40 (Asymptotic behavior of solutions), 35B33 (Critical exponent), 45K05 (Integro partial diff eq), 47G20 (Integro diff oper).

\end{abstract}

\tableofcontents

\section{Introduction}\label{s:intro}

Let $N \geq 1$ be an integer. A generic point $x\in \R^{N}$ will be written $x=(x',x_{N})$ with $x'\in \R^{N-1}$ and $x_{N}\in \R$. We denote the upper half-space $\R^{N} _{+}:=\R^{N-1}\times (0,+\infty)$, its boundary being $\partial\R^N_{+} = \R^{N-1}\times\big\{0\big\}$.  
 For given real numbers $\alpha > 0$ and $T > 0$, in this work we are interested in the blow-up of positive solutions $u$ to the semilinear equation
\begin{equation}
\label{eq:1}
\begin{cases}
\partial_t u = Au +  |u|^{\alpha}u  &\mbox{in }\, (0,T)\times \R_{+}^{N}, \\
u(0,x) = u_{0}(x) \geq 0 &\mbox{for }\, x \in \R_{+}^{N-1}, \\
u(t,x',0) = 0 &\mbox{for }\, (t,x')\in [0,T]\times \R^{N-1},
\end{cases}
\end{equation}
where $A$ is a linear diffusion operator, which may be the classical Laplace operator, or a fractional Laplace operator, or an appropriate non regularizing nonlocal operator.

\medskip

If $A := \Delta$, the Laplace operator on $\R^N$, since the seminal work of H. Fujita \cite{Fuj-66} (later completed by  K. Hayakawa \cite{Hay-73}, K. Kobayashi, T. Sirao, and H. Tanaka \cite{Kob-Sir-Tun-77}, F. B. Weissler \cite{Wei-81}), it is known that when $u \not\equiv 0$ is the solution of
\begin{equation}\label{eq:Heat-space}
\partial_t u = \Delta u +  u^{\alpha + 1} \quad \mbox{in }\, (0,T)\times \R^{N}, \quad
u(0,x) = u_{0}(x) \geq 0 \quad\mbox{for }\, x \in \R^{N} ,
\end{equation}
and if $0 < \alpha \leq  2/N$ then one must have $ T < +\infty$, whereas if $\alpha>2/N$ one may have $T=+\infty$ for some \lq\lq small'' nontrivial initial data. For problem \eqref{eq:Heat-space}, the exponent $p_F:=1+2/N$ is then referred to as the {\it Fujita exponent}.  In \cite{Kav-87} the second author shows that in the case of the half-space, corresponding to  $A = \Delta$ in equation \eqref{eq:1},  the Fujita exponent is $p_{F} := 1 + 2/(N+1)$. As a matter of fact, if the half-space $\R_{+}^N$ is replaced with a domain $\Omega \neq \R^N$ which is a cone with vertex at the origin, there exists an exponent $\alpha_{*}(\Omega) < 2/N$, such that any positive solution blows up in finite time when $0 < \alpha \leq \alpha_{*}(\Omega)$. On the other hand one can show that there exist positive global solutions when $\alpha > \alpha_{*}(\Omega)$, and $(N-2)\alpha < 4$ (so that when $N \geq 3$ one has $\alpha + 2 < 2^* := 2N/(N-2)$, the critical Sobolev exponent).

When the operator $A$ is given by 
\begin{equation}\label{eq:A-kernel}
(Au)(x) := \int_{\R^N} J(x-y)u(y)dy - u(x),
\end{equation}
where $J \in L^1(\R^N)$ is a nonnegative function such that for some constants $a > 0$ and $\beta \in (0,2]$ one has
$$
{\widehat J}(\xi) = 1 - a|\xi|^\beta + o(|\xi|^\beta)\quad\mbox{as }\, \xi \to 0,
$$
then in \cite{Alf-fujita} the first author shows that the Fujita exponent for the problem
\begin{equation}\label{eq:space}
\partial_t u = Au +  u^{\alpha + 1} \quad \mbox{in }\, (0,T)\times \R^{N}, \quad
u(0,x) = u_{0}(x) \geq 0 \quad\mbox{for }\, x \in \R^{N} ,
\end{equation}
is $p_F=1+\beta/N$. For the fractional Laplacian, the corresponding result is proved by S. Sugitani \cite{Sug-75}: when $A = -(-\Delta)^{\beta/2}$ for $\beta \in (0,2)$, the Fujita exponent is $p_F=1+\beta /N$. 

One can interpret these results as a consequence of the fact that, for such weakly nonlinear diffusion equations on $\R^N$, the zero solution is highly unstable, no matter the norm one may consider, as long as one is interested in the global existence of solutions. This is in contrast with the analogous semilinear diffusion equation on a bounded domain $\Omega \subset \R^N$, 
\begin{equation}
\label{eq:1-Bounded}
\begin{cases}
\partial_t u = \Delta u +   |u|^{\alpha}u  &\mbox{in }\, (0,T)\times \Omega, \\
u(0,x) = u_{0}(x)  &\mbox{for }\, x \in \Omega, \\
u(t,\sigma) = 0  &\mbox{for }\, (t,\sigma)\in [0,T]\times \partial\Omega.
\end{cases}
\end{equation}
Indeed for such equations, an initial data $u_{0}$ which is small enough in an appropriate norm, gives rise to a global solution, no matter the sign of $u_{0}$ (see for instance Th. Cazenave \& A. Haraux \cite[Chapter 5]{CazenaveThHaraux}).

As a matter of fact, on a bounded domain, a situation in which all nonnegative solutions blow up in finite time is the following equation which a is simple perturbation of \eqref{eq:1-Bounded}. To see this, let us denote by $\lambda_{1} > 0$ the first eigenvalue of the operator $(L,D(L))$ defined by
$$Lu := -\Delta u, \qquad \mbox{for }\, 
u \in D(L) := \left\{v \in H^1_{0}(\Omega) \; ; \; \Delta v \in L^2(\Omega)\right\}.$$
Then it is easy to see that for any $\lambda \geq \lambda_{1}$ the solution of
\begin{equation}
\label{eq:Bounded-lambda}
\begin{cases}
\partial_t u = \Delta u + \lambda u +   u^{\alpha + 1} & \mbox{in }\, (0,T)\times \Omega, \\
u(0,x) = u_{0}(x) \geq 0 & \mbox{for }\, x \in \Omega, \\
u(t,\sigma) = 0 & \mbox{for }\, (t,\sigma)\in [0,T]\times \partial\Omega,
\end{cases}
\end{equation}
blows up in finite time when $u_{0} \not\equiv 0$ and $\alpha > 0$. Indeed, denote by $\phi_{1} > 0$ the eigenfunction associated to $\lambda_{1}$, that is
$$-\Delta \phi_{1} = \lambda_{1}\phi_{1}, \qquad \phi_{1} \in H^1_{0}(\Omega), \qquad
\int_{\Omega}\phi_{1}(x)dx = 1,$$
and multiply the first equation of \eqref{eq:Bounded-lambda} by $\phi_{1}$. On sees that if we set 
$$M(t) := \int_{\Omega}u(t,x)\phi_{1}(x)dx,$$ 
then since $\int_{\Omega}\phi_{1} \Delta u dx = \int_{\Omega} u\Delta \phi_{1}dx = -\lambda_{1}M(t)$, we have 
$$M'(t) = \int_{\Omega}\phi_{1} \Delta u dx + \lambda M(t) + \int_{\Omega}u^{\alpha + 1}\phi_{1}dx \geq (\lambda - \lambda_{1})M(t) + M(t)^{\alpha + 1}.$$
Where we have used the Jensen inequality 
$$\int_{\Omega}u^{\alpha + 1}(t,x)\phi_{1}(x)dx \geq \left(\int_{\Omega}u(t,x)\phi_{1}(x)dx\right)^{\alpha + 1}.$$
Finally, when $\lambda \geq \lambda_{1}$, we see $M'(t) \geq M(t)^{1+\alpha}$, and this implies that the solution $u(t)$ cannot exist for all times $t > 0$.

\section{Main results and comments}\label{s:main}

In this paper we are interested in the case in which the operator $A$ is a rather general diffusion operator (including the above examples)  and the domain on which the diffusion equation is considered is a half-space. Depending on the linear operator $A$, equation \eqref{eq:1} can be a partial differential equation or an integro-differential equation.

\subsection{Assumptions and main results}

The class of initial data considered is those satisfying the following assumption.

\begin{assumption}[Initial data] \label{ass:initial}
 The initial data $u_0 \in L^{\infty}(\R_{+}^{N})\cap L^{1}(\R_{+}^{N})$ satisfies
\begin{equation}\label{eq:Ini-data}
u_{0}\not\equiv0, \quad
\forall x \in \R_{+}^N, \quad u_0(x)\geq 0, \quad 
m_{1}=m_1(u_0) := \int_{\R_{+}^N} x_{N}u_{0}(x)dx < +\infty.
\end{equation}
\end{assumption}

In order to setup the types of linear operators which can be treated by our approach, we start by introducing a function $J_0$ as follows.

\begin{assumption}[Function $J_0$]\label{ass:J0}
Let $J_{0}$ be a function satisfying
\begin{equation}\label{eq:J0}
J_{0}\in C(\R^N;\R), \qquad
J_{0}(\xi) = 1 - a|\xi|^{\beta} + o(|\xi|^{\beta}), \quad \text{ as } \xi \to 0,
\end{equation}
where 
\begin{equation}\label{eq:a-beta}
a > 0, \qquad 0 < \beta \leq 2,
\end{equation}
and 
\begin{equation}\label{eq:J0-infini}
\forall r > 0, \qquad \sup_{|\xi| \geq r} J_{0}(\xi) < 1.
\end{equation}
\end{assumption}

Equipped with such a function $J_0$, we now introduce a kernel $G$ as follows.

\begin{assumption}[The kernel $G$]\label{ass:kernel-G}
We denote by $\MM_{b}(\R^N)$ the space of bounded measures on $\R^N$. We shall say that a measure $\mu \in \MM_{b}(\R^N)$ is even with respect to the $x_N$ variable, (resp. with respect to $x'$ variable), when $\langle \mu, \phi \rangle = 0$ for any function $\phi \in C_{b}(\R^N)$ such that $\phi(x',-x_{N}) = -\phi(x',x_{N})$,  (resp. such that $\phi(-x',x_{N}) = - \phi(x',x_{N})$), for all $x = (x',x_{N})\in \R^N$. Also, as the definition of the Fourier transform on $\R^N$ we shall take
$$(\FF(f))(\xi) = \widehat f(\xi) := \int _{\R^N}{\rm e}^{-{\rm i}x\cdot \xi }f(x)dx, $$
for $f \in L^{1}(\R^{N})$.

We consider a kernel $G$ such that
\begin{equation}\label{eq:G-L1}
G \in C((0,+\infty); \MM_{b}(\R^N)), \quad G(t) \geq 0, \quad
\quad \forall t >0, \quad G(t)(\R^N) = 1,
\end{equation}
and for all $t > 0$, 
\begin{equation}
\label{parite-kernel}
G(t)\,\text{ is even with respect to }\, x_{N}, \qquad
G(t)\,\text{ is even with respect to }\, x'.
\end{equation}
Moreover we assume that there exists a function $J_{0}$ satisfying Assumption \ref{ass:J0}, namely conditions \eqref{eq:J0}--\eqref{eq:J0-infini}, such that the Fourier transform of $G$ is written
\begin{equation}
\label{fourier-kernel}
\FF(G(t)) (\xi) = {\rm e}^{t(J_0 (\xi)-1)}.
\end{equation}
\end{assumption}

Equipped with such a kernel $G$, we now state our assumption on the linear operator $A$.

\begin{assumption}[The linear diffusion equation]\label{ass:diffusion}
For a kernel $G$ satisfying Assumption \ref{ass:kernel-G}, and for any initial data $v_{0} \in \MM_{b}(\R^N)$, we assume that the unique solution of the Cauchy problem
\begin{equation}
\label{cauchy-diffusion}
\partial_{t} v = Av \quad \text{ in } (0,+\infty)\times \R^{N}, \quad v(0,\cdot) = v_{0},
\end{equation}
is given by the convolution
\begin{equation}
\label{convolution}
v(t) = G(t)*v_{0}.
\end{equation}
Note that condition \eqref{parite-kernel} implies in particular that $G(t)$ is even with respect to $x$, thus the semigroup is self-adjoint and so is its infinitesimal generator $A$.
\end{assumption}

For $u_0\in L^{\infty}(\R^{N}_+)\cap L^{1}(\R^{N}_+)$, the above assumptions enable us to give a sense to both the linear (possibly nonlocal) Cauchy problem in the half-space
\begin{equation}\label{eq:Cauchy-Half-Space-linear}
\begin{cases}
\partial_{t} u = Au&\text{ in } (0,+\infty)\times \R ^{N}_+\\
u(0,\cdot) = u_{0} &\text{ in } \R ^{N}_+\\
u(t,x',0) = 0 &\text{ on } \partial \R ^{N}_+,
\end{cases}
\end{equation}
and the semilinear (possibly nonlocal) Cauchy problem \eqref{eq:1} in the half-space $\R^N_+$. This will be clarified  in Section \ref{s:cauchy}. The last required assumption is the following.
\begin{assumption}[Comparison principle]\label{ass:comparison}
The Cauchy problems \eqref{eq:Cauchy-Half-Space-linear} and \eqref{eq:1} in the half-space enjoy the comparison principle. In particular, for a nonnegative initial data $u_0$ as in Assumption \ref{ass:initial}, the solution $u = u(t,x)$ to \eqref{eq:Cauchy-Half-Space-linear}, or to \eqref{eq:1}, remains nonnegative as long as it exists. 
\end{assumption}

Notice that the above assumption is automatically satisfied in many situations, see Remark \ref{rem:nonnegativity}.

\medskip

Under the above set of  assumptions, our main result is the identification of the Fujita exponent $p_F := 1 + \beta/(N+1)$ for equation \eqref{eq:1}. 
\begin{theorem}[Systematic blow-up]
\label{th:systematic}
Let Assumptions \ref{ass:initial}, \ref{ass:J0}, \ref{ass:kernel-G}, \ref{ass:diffusion} and \ref{ass:comparison} hold. 
Denoting
$$p_{F} := 1 + \frac{\beta}{N+1}$$
assume that $0 < \alpha  < \beta/(N+1) = p_F - 1 $. Then  any nontrivial solution $u$ to the Cauchy problem \eqref{eq:1} blows up in finite time.
\end{theorem}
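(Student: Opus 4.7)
Plan: The strategy I would pursue is to exploit the linear conservation law $G(t)\ast x_N=x_N$, which is forced by the $x_N$-evenness and unit mass of $G(t)$ in Assumption \ref{ass:kernel-G}, in order to reduce blow-up to a Bernoulli-type differential inequality for the first moment $M_1(t):=\int_{\R_+^N}x_N\,u(t,x)\,dx$. Extending $u$ by odd reflection in the $x_N$ variable to $\tilde u$ on $\R^N$ (a function still solving the same equation thanks to \eqref{parite-kernel}) and invoking self-adjointness of $A$ on $\R^N$ (Assumption \ref{ass:diffusion}), the identity $A(x_N)=0$ (which is nothing but $\partial_t(G(t)\ast x_N)|_{t=0}=0$) produces
\begin{equation*}
M_1'(t) \;=\; \int_{\R_+^N} x_N\,u(t,x)^{\alpha+1}\,dx \;\geq\; 0.
\end{equation*}

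The next step is to bound this right-hand side from below. By Assumption \ref{ass:comparison}, $u(t,x)\geq v(t,x)$ where $v$ is the linear Dirichlet solution on $\R_+^N$; using the method of images (legitimate thanks to the $x_N$-parity of $G$), $v(t,x)=\int_{\R_+^N}[G(t,x-y)-G(t,x-\bar y)]u_0(y)\,dy$ with $\bar y:=(y',-y_N)$. Taylor-expanding this difference in $y_N$ and invoking the self-similar scaling $G(t,x)\sim t^{-N/\beta}\Phi(x/t^{1/\beta})$, itself extracted from $J_0(\xi)=1-a|\xi|^\beta+o(|\xi|^\beta)$ by a Fourier-inversion argument, one derives the pointwise estimate $v(t,x)\gtrsim c\,m_1\,x_N\,t^{-(N+2)/\beta}$ in the range $|x|\lesssim t^{1/\beta}$ and $t\gg 1$. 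Integrating against $x_N$ over the half-ball $B(t):=\{|x|\leq R_0\,t^{1/\beta}\}\cap\R_+^N$ yields $\int_{B(t)}x_N\,u\,dx\geq\int_{B(t)}x_N\,v\,dx\geq c\,m_1$.

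Hölder's inequality on $B(t)$ with respect to the measure $x_N\,dx$, combined with the volume estimate $\int_{B(t)}x_N\,dx\asymp t^{(N+1)/\beta}$, produces
\begin{equation*}
M_1'(t)\;\geq\;\int_{B(t)}x_N\,u^{\alpha+1}\,dx\;\geq\;\frac{\bigl(\int_{B(t)}x_N\,u\,dx\bigr)^{\alpha+1}}{\bigl(\int_{B(t)}x_N\,dx\bigr)^{\alpha}}\;\geq\;\frac{c\,m_1^{\alpha+1}}{t^{(N+1)\alpha/\beta}}.
\end{equation*}
Setting $\gamma:=(N+1)\alpha/\beta<1$, integration of this first crude inequality already yields $M_1(t)\gtrsim t^{1-\gamma}\to\infty$. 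To promote this divergence into genuine finite-time blow-up, I would bootstrap by restarting the argument at a later time $s$: the comparison $u(t)\geq G^D(t-s)\ast u(s)$ allows the same pointwise lower bound with $m_1$ replaced by the (now much larger) $M_1(s)$. Provided that enough of the $x_N$-weighted mass of $u(t)$ stays confined to the diffusion ball $B(t)$ so that $\int_{B(t)}x_N u\,dx\gtrsim M_1(t)$, one finally arrives at the autonomous Bernoulli inequality $M_1'(t)\geq c\,M_1(t)^{\alpha+1}/t^{\gamma}$; because $\gamma<1$, direct integration gives $M_1(t)^{-\alpha}\leq M_1(t_0)^{-\alpha}-C(t^{1-\gamma}-t_0^{1-\gamma})$, forcing $M_1$ to explode at some finite time.

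The main obstacle is precisely this concentration/bootstrap step: one must control the tail $\int_{\R_+^N\setminus B(t)}x_N\,u\,dx$ so that nonlinear effects do not spread the weighted mass any faster than the linear semigroup does. Via the Duhamel representation $u(t)=G^D(t)u_0+\int_0^t G^D(t-s)u(s)^{\alpha+1}\,ds$, this reduces to sharp \emph{upper} pointwise estimates on $G(s,\cdot)$ at the correct scale $s^{1/\beta}$ (matching the lower bound already used in Step 2), which are encoded only implicitly on the Fourier side through $\widehat G(s,\xi)=\mathrm{e}^{s(J_0(\xi)-1)}$. Proving these two-sided heat-kernel type bounds in the abstract setting of Assumptions \ref{ass:J0}--\ref{ass:kernel-G}, from the sole knowledge of $J_0$ near the origin together with \eqref{eq:J0-infini}, is where the bulk of the Fourier-analytic work will go.
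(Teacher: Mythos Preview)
Your strategy is the ``first-moment'' approach: track $M_1(t)=\int_{\R_+^N}x_N\,u(t,x)\,dx$ and try to force a Bernoulli inequality. This is \emph{not} how the paper handles Theorem~\ref{th:systematic}; in fact it is essentially the route the paper takes for the \emph{critical} case (Theorem~\ref{th:systematic-1}), and only under the extra structural hypotheses listed there. For the subcritical case the paper uses a quite different and much lighter argument, and your plan has real gaps at the level of generality of Assumptions~\ref{ass:initial}--\ref{ass:comparison}.

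\medskip
\noindent\textbf{Where your argument breaks.}
\begin{itemize}
\item The identity $M_1'(t)=\int_{\R_+^N}x_N\,u^{\alpha+1}\,dx$ requires $\int_{\R^N}x_N\,A\widetilde u\,dx=0$. You justify this via ``$A(x_N)=0$'', but $x_N$ is unbounded and in the abstract setting $A(x_N)$ need not make sense; making this rigorous needs a truncation argument and control of $A(x_N\zeta_R)$. The paper carries this out (Lemmas~\ref{lem:A-zeta-R}--\ref{lem:xN-Au}) only under the extra conditions of Theorem~\ref{th:systematic-1} (e.g.\ \eqref{eq:Cond-J-cri}); under the bare Assumptions~\ref{ass:J0}--\ref{ass:diffusion} it is not available.
\item Your pointwise lower bound $v(t,x)\gtrsim m_1\,x_N\,t^{-(N+2)/\beta}$ relies on a self-similar density $G(t,x)\sim t^{-N/\beta}\Phi(x/t^{1/\beta})$ and on differentiating $G$ in $x_N$. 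For the non-regularizing operators of Example~\ref{ex:nonreg}, $G(t)$ carries a Dirac mass (see \eqref{G-measure}) and its absolutely continuous part need not be differentiable, so this step fails in the stated generality.
\item You yourself flag the bootstrap/concentration step (passing from the lower bound in terms of $m_1$ to one in terms of $M_1(t)$) as the main obstacle, and it is: controlling $\int_{\R_+^N\setminus B(t)}x_N\,u\,dx$ for the \emph{nonlinear} solution via Duhamel would require two-sided pointwise bounds on $G$ that are simply not encoded in Assumptions~\ref{ass:J0}--\ref{ass:kernel-G}.
\end{itemize}

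\medskip
\noindent\textbf{What the paper does instead.} The paper abandons the unbounded weight $x_N$ in favour of the \emph{bounded} moving test function $y\mapsto K(t-s,y,\gamma t^{1/\beta}\mathbf e_N)$ (the half-space kernel \eqref{eq:Def-K}). Setting
\[
g(s)=\int_{\R_+^N}K(t-s,y,\gamma t^{1/\beta}\mathbf e_N)\,u(s,y)\,dy,
\]
self-adjointness and Jensen give $g'(s)\ge g(s)^{1+\alpha}$ directly (Lemma~\ref{lem:f-above}), because $K(t-s,\cdot,\gamma t^{1/\beta}\mathbf e_N)$ has total mass $\le 1$; no volume computation, no bootstrap. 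Integrating yields $f(t):=g(0)\le C(1+t)^{-1/\alpha}$. On the other hand $f(t)=v(t,\gamma t^{1/\beta}\mathbf e_N)$ is bounded \emph{below} by $c\,m_1\,t^{-(N+1)/\beta}$ via a purely Fourier-side computation (Lemma~\ref{lem:from-below}) that needs only \eqref{eq:J0}--\eqref{eq:J0-infini}, never a pointwise form of $G$. Comparing the two bounds gives $1/\alpha\le (N+1)/\beta$, a contradiction when $\alpha<\beta/(N+1)$.

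\medskip
In short: your moment approach is natural but, at this generality, both the integration-by-parts step and the pointwise kernel estimates are unjustified, and the bootstrap is genuinely hard. The paper's device of evaluating the linear solution at a single moving point $\gamma t^{1/\beta}\mathbf e_N$ sidesteps all three difficulties at once.
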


For the critical case when $\alpha = \beta/(N+1)$, the proof of systematic blow-up is more delicate but we can prove the result under some slight restrictions on the linear operator $A$.

\begin{theorem}[Systematic blow-up when $\alpha = \beta/(N+1)$]
\label{th:systematic-1} Let Assumptions \ref{ass:initial}, \ref{ass:J0}, \ref{ass:kernel-G}, and \ref{ass:diffusion}  hold and let $\alpha = \beta/(N+1)$.
Moreover assume that either $A = \Delta$ the Laplacian operator, or $A=-(-\Delta)^{\beta/2}$  the fractional Laplacian operator ($0<\beta<2$), or $Au = J*u - u$ for a nonnegative $J\in L^{1}(\R^{N})\cap L^{\infty}(\R^{N})$ such that 
$$J(x',-x_{N}) = J(x',x_{N}), \qquad J(-x',x_{N}) = J(x',x_{N}), $$
and moreover 
\begin{equation}\label{eq:Cond-J-cri}
\text{ for all } z'\in \R^{N-1}, z_N\mapsto J(z',z_N) \text{ is nonincreasing on $(0,+\infty)$.}
\end{equation}
Then  any nontrivial solution $u$ to the Cauchy problem \eqref{eq:1} blows up in finite time.
\end{theorem}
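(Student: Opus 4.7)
The plan is to argue by contradiction, assuming $u$ is a nontrivial global solution of \eqref{eq:1} at the critical exponent $\alpha=\beta/(N+1)$, and to sharpen the first-moment argument that handles the subcritical case in Theorem~\ref{th:systematic}. The starting identity is the same, namely
\[
M'(t)=\int_{\R^N_+}u^{\alpha+1}(t,x)\,x_N\,dx,\qquad M(t):=\int_{\R^N_+}u(t,x)\,x_N\,dx,
\]
which I would derive from the self-adjointness of $A$ together with the fact that $x_N$ lies in the kernel of $A$ in each of the three admissible cases: harmonic for $A=\Delta$; zero Fourier symbol for $A=-(-\Delta)^{\beta/2}$; and $\int_{\R^N}z_N J(z)\,dz=0$, a consequence of the evenness of $J$ in $x_N$, in the case $Au=J*u-u$.

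The technical core, and the main obstacle, is a sharp dipole-type pointwise lower bound on $u$. Letting $\tilde u$ denote the odd-in-$x_N$ extension of $u$ to $\R^N$ and writing $y^*:=(y',-y_N)$, the radial monotonicity of $G(\tau)$ --- automatic for $A=\Delta$ and $A=-(-\Delta)^{\beta/2}$, and enforced by hypothesis~\eqref{eq:Cond-J-cri} in the convolution case --- will supply both the reflection inequality $G(\tau)(x-y)\geq G(\tau)(x-y^*)$ for $x_N,y_N>0$ (whence $\tilde u\geq 0$ on $(0,\infty)\times\R^N_+$) and the quantitative refinement
\[
G(\tau)(x-y)-G(\tau)(x-y^*)\;\geq\;c_0\,\frac{x_N\,y_N}{\tau^{1/\beta}}\,G(\tau)(x-y),
\]
valid on the bulk region $\{|x'-y'|\leq c\tau^{1/\beta},\ 0<x_N,y_N\leq c\tau^{1/\beta}\}$, obtained by integrating the $x_N$-monotonicity of $G(\tau)(z',\cdot)$ along the segment from $y^*$ to $y$. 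Combined with the Duhamel lower bound $u(t,\cdot)\geq S^D(t-s)u(s,\cdot)$ and the self-similar scaling $G(\tau)(z)\sim\tau^{-N/\beta}$ on $|z|\leq c\tau^{1/\beta}$, this would yield, for $0<s<t$ and $x$ in the bulk,
\[
u(t,x)\;\geq\;c_1\,M(s)\,x_N\,(t-s)^{-(N+2)/\beta}.
\]

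Inserting this estimate back into the identity for $M'(t)$ with the choice $s=t/2$, and using the critical identity $(N+1)\alpha=\beta$ to cancel the surplus powers of $(t-s)$ in the integral of $u^{\alpha+1}x_N$, leads to the delay differential inequality
\[
M'(t)\;\geq\;\frac{c_2}{t}\,M(t/2)^{\alpha+1},\qquad t\geq t_0.
\]
Combined with the monotonicity $M'\geq 0$, a bootstrap along a geometric sequence of times $t_k:=2^k t_0$ will then escalate this into an ever-accelerating sequence of lower bounds on $M(t_k)$, ultimately contradicting the assumed global existence. The hard step is establishing the dipole bound with the sharp factor $x_N y_N/\tau^{1/\beta}$; in its absence one would only recover the weaker H\"older-type bound of Theorem~\ref{th:systematic}, which closes the ODE argument strictly below the critical exponent --- which is exactly why the radial monotonicity of $G(\tau)$ (either automatic or provided by~\eqref{eq:Cond-J-cri}) is indispensable in the present theorem.
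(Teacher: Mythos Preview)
Your route is genuinely different from the paper's, and it has two substantive gaps.

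\medskip

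\textbf{What the paper does.} The paper does \emph{not} pursue a pointwise Kaplan-type bound. It first shows, by re-running the subcritical estimate at $\alpha=\beta/(N+1)$, that any global solution must satisfy a \emph{uniform upper bound}
\[
M_1(t)=\int_{\R^N}x_N\widetilde u(t,x)\,dx\leq 2C(\alpha,\gamma)\qquad\text{for all }t\geq 0,
\]
whence $\int_0^\infty\!\!\int_{\R^N_+}x_Nu^{1+\alpha}\,dx\,dt<\infty$. It then runs a rescaled test-function argument: multiply the equation by $x_N\zeta_R(x)\psi_R(t)$ with $\zeta_R(x)=\rho(\epsilon|x|/R)$ and $\psi_R(t)=\rho((t-T)/R^\beta)$, use the operator bounds $x_NA(x_N\zeta_R)\geq 0$ for $|x|\geq 2R/\epsilon$ and $|A(x_N\zeta_R)|\leq C|x_N|\epsilon^\beta R^{-\beta}$ (the content of Lemma~\ref{lem:A-zeta-R}), apply H\"older with the critical balance $(N+1)\alpha=\beta$, and pass $R\to\infty$ to obtain
\[
\int_T^\infty\!\!\int_{\R^N_+}x_Nu^{1+\alpha}\,dx\,dt\;\leq\;C\epsilon^{\beta\alpha/(\alpha+1)}\left(\int_T^\infty\!\!\int_{\R^N_+}x_Nu^{1+\alpha}\,dx\,dt\right)^{1/(\alpha+1)}.
\]
Letting $\epsilon\to 0$ forces $u\equiv 0$ on $(T,\infty)$, a contradiction. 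No pointwise kernel control is needed; only the integrated bounds of Lemma~\ref{lem:A-zeta-R}, which hold in all three cases.

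\medskip

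\textbf{Gap 1: the delay inequality does not close.} Your endgame is the inequality
\[
M'(t)\;\geq\;\frac{c_2}{t}\,M(t/2)^{\alpha+1},\qquad M\text{ nondecreasing},\ M(t_0)>0.
\]
This does \emph{not} force finite-time blow-up of $M$: for $\alpha\leq 1$ (which is always the case here since $\alpha=\beta/(N+1)\leq 2/2=1$) the global function $M(t)=e^{\lambda t}$ satisfies it for all large $t$. Your geometric bootstrap gives only $m_k:=M(2^kt_0)\to\infty$ as $k\to\infty$, i.e.\ $M(t)\to\infty$ along $t\to\infty$; that is no contradiction with global existence of $u$. To close, you would need an \emph{a priori upper bound} on $M(t)$ --- exactly the content of the paper's Lemma~\ref{lem:M-1-bounded}, which comes from the estimates of Lemmas~\ref{lem:from-below} and~\ref{lem:f-above} and which you do not invoke.

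\medskip

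\textbf{Gap 2: the dipole bound is not available in the convolution case.} For $Au=J*u-u$ the kernel is the measure
\[
G(t)=e^{-t}\delta_0+e^{-t}\sum_{k\geq 1}\frac{t^k}{k!}J^{*(k)},
\]
which is not a function, has no self-similar profile, and certainly does not obey a pointwise lower bound of the shape $G(\tau)(z)\gtrsim\tau^{-N/\beta}$ on $|z|\leq c\tau^{1/\beta}$ from the hypotheses alone. Moreover, the ``quantitative refinement'' you state,
\[
G(\tau)(x-y)-G(\tau)(x-y^*)\geq c_0\,\frac{x_Ny_N}{\tau^{1/\beta}}\,G(\tau)(x-y),
\]
is dimensionally inconsistent (the right-hand side should carry $\tau^{-2/\beta}$ for your downstream exponent $-(N+2)/\beta$ to come out right), and more importantly it requires a strict quantitative decay of $G(\tau)(z',\cdot)$ in $z_N$, whereas the hypothesis~\eqref{eq:Cond-J-cri} only gives monotonicity --- $J$ may be constant on plateaux, killing the lower bound on the derivative. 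The paper's proof sidesteps all of this by working with the Fourier-side estimate of $A(x_N\zeta_R)$ in Lemma~\ref{lem:A-zeta-R}, which needs only the expansion $\widehat J(\xi)=1-a|\xi|^\beta+o(|\xi|^\beta)$.

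\medskip

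In short: your strategy could perhaps be pushed through for $A=\Delta$ (and maybe the fractional Laplacian) once supplemented with the uniform bound on $M$, but as written it neither reaches a contradiction nor covers the nonlocal convolution operators in the statement.
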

Notice that the above restrictions on $A$ insure in particular that Assumption \ref{ass:comparison} is automatically satisfied , see Remark \ref{rem:nonnegativity}, and therefore there is no need to require Assumption \ref{ass:comparison} in the statement of Theorem \ref{th:systematic-1}.

For a function $f \in L^1(\R_{+}^N)$ we define its odd extension ${\widetilde f}$ by setting
$${\widetilde f}(x) := f(x)\quad\text{if }\, x_{N} > 0, \qquad
{\widetilde f}(x) := -f(x)\quad\text{if }\, x_{N} < 0.$$
Then we have the following result regarding the existence of global nonnegative solutions to \eqref{eq:1} when $\alpha > \beta/(N+1)$.
\begin{theorem}[Possible global existence and extinction]\label{th:global} With the assumptions of Theorem \ref{th:systematic} let $\alpha > p_F - 1 = \beta/(N+1)$.  Then there exists $\ep_{*} > 0$ such that, for any initial data $u_{0}$ satisfying 
$$
m_1(u_0)+\|{\mathscr{F}({\widetilde u}_{0})}\|_{L^{1}} < \ep_{*},
$$
the solution to the Cauchy problem \eqref{eq:1} is global in time and  satisfies, for some $C=C(u_0)>0$,
\begin{equation}
\label{extinction}
\forall t \geq 0, \qquad \|u(t,\cdot)\|_{L^\infty}\leq C (1+t)^{-(N+1)/\beta}.
\end{equation}
\end{theorem}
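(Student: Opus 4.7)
The plan is to run a fixed-point argument on the Duhamel formulation
\[
u(t)=S(t)u_0+\int_0^t S(t-s)\,u^{\alpha+1}(s)\,ds,
\]
where $S$ denotes the Dirichlet semigroup of $A$ on $\R^N_+$, the whole point being to extract from $S$ a $(1+t)^{-(N+1)/\beta}$ decay, improved over the whole-space rate $t^{-N/\beta}$ by the extra $t^{-1/\beta}$ stemming from the boundary condition.

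\emph{Reduction to $\R^N$ and improved linear estimate.} First, I would extend $u_0$ to the $x_N$-odd extension $\widetilde u_0$ on $\R^N$: since $G(t)$ is even in $x_N$ (Assumption \ref{ass:kernel-G}) and $v\mapsto|v|^\alpha v$ preserves $x_N$-oddness, the whole-space Cauchy problem starting from $\widetilde u_0$ stays odd in $x_N$, and its restriction to $\R^N_+$ solves \eqref{eq:1} with $S(t)u_0=(G(t)*\widetilde u_0)|_{\R^N_+}$. The key estimate to establish is, for any $g$ odd in $x_N$,
\[
\|G(t)*g\|_{L^\infty(\R^N)} \leq C(1+t)^{-(N+1)/\beta}\,m_1(|g|) + C\,e^{-\delta t}\|\FF(g)\|_{L^1},
\]
where $m_1(|g|):=\int_{\R^N_+}x_N|g(x)|\,dx$. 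It follows from the oddness-driven Fourier inequality $|\FF(g)(\xi)|\leq 2|\xi_N|\,m_1(|g|)$ (via $|\sin\theta|\leq|\theta|$) combined with $\FF(G(t)*g)=e^{t(J_0-1)}\FF(g)$: one splits the inverse Fourier integral into $\{|\xi|\leq r\}$, where \eqref{eq:J0} gives $e^{t(J_0-1)}\leq e^{-at|\xi|^\beta/2}$ and the extra factor $|\xi_N|$ generates the exponent $(N+1)/\beta$ via the rescaling $\zeta=t^{1/\beta}\xi$, and $\{|\xi|>r\}$, where \eqref{eq:J0-infini} produces the exponential damping $e^{-\delta(r)t}$. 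Applied to $\widetilde u_0$ this yields $\|S(t)u_0\|_{L^\infty}\leq C_0\ep_0(1+t)^{-(N+1)/\beta}$ with $\ep_0=m_1(u_0)+\|\FF(\widetilde u_0)\|_{L^1}$.

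\emph{Nonlinear fixed point.} Setting $\|v\|_X:=\sup_{t>0}(1+t)^{(N+1)/\beta}\|v(t)\|_{L^\infty(\R^N_+)}$, I would seek the solution as the unique fixed point of $\Phi(v)(t)=S(t)u_0+\int_0^t S(t-s)v^{\alpha+1}(s)\,ds$ in
\[
\mathcal B(M,K):=\bigl\{v\geq 0:\ \|v\|_X\leq M,\ \sup_{t>0} m_1(v(t))\leq K\bigr\},
\]
for suitable parameters $M,K$ (positivity being automatic by Assumption \ref{ass:comparison}). To estimate $\Phi(v)$ in $X$, I split $\int_0^t=\int_0^{t/2}+\int_{t/2}^t$: on $[0,t/2]$, apply the linear estimate to $v^{\alpha+1}(s)$ together with the bookkeeping $m_1(v^{\alpha+1}(s))\leq\|v(s)\|_{L^\infty}^\alpha\,m_1(v(s))\leq M^\alpha K(1+s)^{-\alpha(N+1)/\beta}$; on $[t/2,t]$ use the trivial contractivity $\|S(t-s)f\|_{L^\infty}\leq\|f\|_{L^\infty}$ with $\|v^{\alpha+1}(s)\|_{L^\infty}\leq M^{\alpha+1}(1+s)^{-(\alpha+1)(N+1)/\beta}$. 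The subcriticality assumption $\alpha>\beta/(N+1)$ is exactly $\alpha(N+1)/\beta>1$, which makes $\int_0^\infty(1+s)^{-\alpha(N+1)/\beta}\,ds$ converge and yields $\|\Phi(v)\|_X\leq C_0\ep_0+C_1 M^\alpha K+C_2 M^{\alpha+1}$. To propagate the weighted bound, test against $x_N$ on $\R^N$: the evenness of $G$ in $x_N$ forces $A(x_N)=0$, so $m_1(\Phi(v)(t))=m_1(u_0)+\int_0^t m_1(v^{\alpha+1}(s))\,ds\leq m_1(u_0)+C_3 M^\alpha K$. Choosing $M=2C_0\ep_0$, $K=2m_1(u_0)$, and $\ep_*$ small enough that each $C_i M^\alpha$ is $\leq 1/2$, the map $\Phi$ self-maps $\mathcal B(M,K)$; the parallel estimate on $\Phi(v_1)-\Phi(v_2)$ provides the contraction, so the unique fixed point is the global solution of \eqref{eq:1} satisfying \eqref{extinction}.

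\emph{Main obstacle.} The delicate technical point will be controlling the high-frequency tail $e^{-\delta t}\|\FF(g)\|_{L^1}$ in the linear estimate when $g=v^{\alpha+1}(s)$, since in the general (possibly non-regularizing) setting $G(t)$ may contain a Dirac mass and nothing automatically bounds $\|\FF(\widetilde v^{\alpha+1})\|_{L^1}$ for non-integer $\alpha$. The cleanest remedy is to add $L:=\sup_t\|\FF(\widetilde v(t))\|_{L^1}$ as a third tracked quantity, initially controlled by $\ep_0$: one propagates it by iterating the Duhamel identity on the Fourier side, using the Banach-algebra property of $\FF^{-1}(L^1)$ when $\alpha+1$ is an integer, and otherwise exploiting the exponential prefactor $e^{-\delta(t-s)}$ to absorb the crude bound $|\FF(\widetilde v^{\alpha+1})|\leq\|v\|_{L^\infty}^\alpha\|v\|_{L^1}$ into a harmless term decaying faster than any polynomial.
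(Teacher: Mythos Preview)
Your Duhamel/fixed-point scheme is a genuinely different route from the paper's, and the obstacle you flag at the end is real and, as written, not resolved.

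\medskip

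\textbf{What the paper does instead.} The paper never iterates the Duhamel formula. It looks for a \emph{supersolution} of the form $z(t,x)=g(t)\,v(t,x)$, where $v$ is the solution of the linear problem \eqref{eq:Cauchy-Half-Space-linear} with data $u_0$. Since $\partial_t v=Av$, one has $\partial_t z-Az-z^{1+\alpha}=(g'-g^{1+\alpha}v^\alpha)\,v$, so it suffices that
\[
\frac{g'(t)}{g^{1+\alpha}(t)}\ \geq\ \|v(t,\cdot)\|_{L^\infty}^{\alpha}.
\]
Your improved linear estimate (which is Lemma~\ref{lem:decrease}) gives $\|v(t)\|_{L^\infty}\leq C\ep_0(1+t)^{-(N+1)/\beta}$ with $\ep_0=m_1(u_0)+\|\FF(\widetilde u_0)\|_{L^1}$; since $\alpha(N+1)/\beta>1$, the ODE for $g$ integrates to a bounded function provided $\ep_0$ is small. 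Then the comparison principle (Assumption~\ref{ass:comparison}) yields $0\leq u\leq g(t)v(t,x)$, hence global existence and the decay \eqref{extinction}. The linear estimate is thus applied \emph{once}, to $u_0$ only, and never to $u^{\alpha+1}$; this is exactly what makes the high-frequency term $\|\FF(\cdot)\|_{L^1}$ harmless.

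\medskip

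\textbf{Where your argument breaks.} In your scheme the linear estimate must be applied to $g=\widetilde v^{\,\alpha+1}(s)$ inside the time integral, which requires a bound on $\|\FF(\widetilde v^{\,\alpha+1}(s))\|_{L^1}$. Your ``crude bound'' $|\FF(\widetilde v^{\alpha+1})|\leq\|v\|_{L^\infty}^\alpha\|v\|_{L^1}$ is a \emph{pointwise} bound on $\widehat g(\xi)$, constant in $\xi$, and gives no control on $\int_{\R^N}|\widehat g(\xi)|\,d\xi$; the exponential factor $e^{-\delta(t-s)}$ sits in front of that divergent integral and does not rescue it. For non-integer $\alpha$ and the non-regularizing operators of Example~\ref{ex:nonreg} there is no obvious substitute. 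A secondary issue is the step $m_1(\Phi(v)(t))=m_1(u_0)+\int_0^t m_1(v^{\alpha+1})$: the identity ``$A(x_N)=0$'' you invoke is only justified (cf.\ Lemma~\ref{lem:xN-Au}) under the extra hypotheses of Theorem~\ref{th:systematic-1}, not under the general assumptions of Theorem~\ref{th:global}; for heavy-tailed kernels with $\beta\leq 1$ the first moment of $G(t)$ is infinite and the formal computation fails. The supersolution approach bypasses both difficulties at once.
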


\subsection{Examples and comments}

Here are a few examples of diffusion operators $A$, kernels $G$ and functions $J_{0}$ which satisfy Assumption \ref{ass:J0} through Assumption \ref{ass:diffusion}.

\begin{ex}[Regularizing operators]\label{ex:reg} Consider $Au := \Delta u$ and 
$$G(t,x) := (4\pi t)^{-N/2} {\rm e}^{-|x|^{2}/4t},$$
that is is the classical heat kernel. Then its Fourier transform is given by
$$\FF(G(t,\cdot))(\xi) = {\rm e}^{t(J_{0}(\xi) - 1)}, \qquad\text{with}\;
J_{0}(\xi) := 1 - |\xi|^2.$$
If $Au := -(-\Delta)^{\beta /2}u$, for some $\beta \in (0,2)$ then $G(t,x)$ is the fractional heat kernel whose Fourier transform is given by
$$\FF(G(t,\cdot))(\xi) = {\rm e}^{t(J_{0}(\xi) - 1)}, \qquad\text{with}\;
J_{0}(\xi) := 1 - |\xi|^\beta.$$
One checks that all the conditions involved in the Assumption \ref{ass:J0} through Assumption \ref{ass:diffusion} are satisfied.
\qed
\end{ex}

\begin{ex}[Non regularizing operators]\label{ex:nonreg} Let $J \in L^1(\R^N) \cap L^\infty(\R^N)$ be a nonnegative function and define the non local diffusion operator $$Au : = J*u - u.$$
Assume that  $J$ is even in the $x_N$ variable, as well as  in the $x'$ variable, and such that its Fourier transform ${\widehat J}$ verifies
\begin{equation}
\label{eq:J0-bis}
{\widehat J}(\xi) = 1 - a |\xi|^\beta + o(|\xi|^\beta),\quad \text{ as $\xi \to 0$}.
\end{equation}
 This kind of nonlocal  operator is very relevant in population dynamics, in order to take into account long-distance dispersal events, see for instance  \cite{Med-Kot-03}, \cite{Cov-Dup-07}, \cite{Gar-11}, \cite{Alf-Cov-17}. 

In this case, the kernel $G$ is in fact the measure
\begin{equation}
\label{G-measure}
G(t,\cdot) := {\rm e}^{-t}\dirac  
+ {\rm e}^{-t}\sum_{k=1}^{+\infty}\frac{t^k}{k!}J^{*(k)},
\end{equation}
where $\dirac$ denotes the Dirac mass at the origin and $J^{*(k)}$ is defined by setting $$J^{*(1)} := J, \quad\text{and}\quad J^{*(k+1)} := J*J^{*(k)}\quad\text{for }\, k \geq 1.$$ 
Then Assumption \ref{ass:diffusion} is satisfied with  $J_0(\xi) := {\widehat J}(\xi)$.
In order to verify that $J_0 := \widehat J$ satisfies \eqref{eq:J0-infini}, for a given $r > 0$, since $\widehat J \in C_{0}(\R^N)$ we may fix $R > r$ such that $\widehat J(\xi) \leq 1/2$ for $|\xi| \geq R$. Now, since $J(x) > 0$ a.e on some ball $B(x_0,\ep)$, we have, for $r \leq |\xi| \leq R$,
$$
\widehat J(\xi) - 1 = \int_{\R^N}J(x)(\cos(x\cdot \xi) - 1)dx \leq \int_{\vert x-x_0\vert <\ep}J(x)(\cos(x\cdot \xi) - 1)dx < 0,
$$
since $\cos(x\cdot \xi) - 1 \leq 0$ on $\R$, and $\cos(x\cdot \xi) - 1  \not\equiv 0$ on $B(x_0,\ep)$. Hence, $\widehat J$ being continuous, we have
$$\delta_{0} := \max_{r \leq |\xi| \leq R} \widehat J(\xi) < 1,$$
and thus \eqref{eq:J0-infini} is satisfied with $\delta := \min(1/2,\delta_{0})$.

As a matter of fact, expansion \eqref{eq:J0-bis}  contains some information on the tails of the convolution kernel $J$. Indeed, as soon as  $J$ has a finite second momentum, namely 
$$m_2 := \int_{\R ^N}| x| ^2J(x)dx<+\infty,$$
the expansion \eqref{eq:J0} holds true with $\beta = 2$, as can be seen in R. Durrett \cite[Chapter 2, subsection 2.3.c, (3.8) Theorem]{Dur-96} among others.  On the other hand, when $m_2 = +\infty$ then more general expansions are possible, with in particular $0<\beta<2$, see the discussion in  \cite[Section 2]{Alf-fujita}. 

For a detailed analysis of the diffusion equation associated with such non regularizing operators, we refer to \cite{Cha-Cha-Ros-06}. 
\qed
\end{ex}

We now comment on some of the assumptions in our main results.

\begin{rem}[On assumption \eqref{eq:J0-infini}] As soon as there is $t_0 > 0$ such that $G(t_0)\in L^1(\R^{N})$, then it follows that ${\widehat G}(t_{0},\xi) \to 0$ as $|\xi|\to+\infty$, so that from \eqref{fourier-kernel} we deduce $J_0(\xi)\to -\infty$ as $| \xi| \to +\infty$, while ${\widehat G}(t_{0},\xi) < 1$ for $|\xi| \geq r > 0$.  Thus condition \eqref{eq:J0-infini} is superfluous. This happens in particular for the regularizing operators of Example \ref{ex:reg}. 

Assumption \eqref{eq:J0-infini} is also superfluous as soon as $J_0(\xi)$ is the Fourier transform of an integrable function, which is the case of the non regularizing operators of Example \ref{ex:nonreg}.

However, in general condition \eqref{eq:J0-infini} is necessary for the proof of our result, as it cannot be deduced from our other assumptions on the diffusion operator $A$: for instance consider the case $N = 1$ and the measure 
$$\mu := \frac{1}{2}\left(\delta_{-1} + \delta_{+1}\right),$$ 
where $\delta_{a}$ denotes the Dirac measure at $a \in \R$. Then if we define the operator $A$ by setting $A v := \mu*v - v$ for $v \in \MM_{b}(\R)$, noting that ${\widehat \mu}(\xi) = \cos(\xi)$, it is easy to see that the Fourier transform of the kernel $G(t)$ is given by
$$\FF(G(t))(\xi) = {\rm e}^{t(\cos(\xi)-1)},$$
so that $J_{0}(\xi) = \cos(\xi)$ does not satisfy \eqref{eq:J0-infini}. Therefore we cannot treat such a diffusion operator with the method developed here.\qed
\end{rem}

\begin{rem}[On the comparison principle assumption]\label{rem:nonnegativity} Assumption \ref{ass:comparison}  is automatically fulfilled in the case of the Laplacian and the fractional Laplacian (see Example \ref{ex:reg}), and of the convolution operator as stated in Example \ref{ex:nonreg} provided condition \eqref{eq:Cond-J-cri} is satisfied. We refer to Remark \ref{rem:nouvelle} for details. \qed
\end{rem}

\subsection{Organization of the paper} The remainder of the paper is organized as follows. In Section \ref{s:basic}, we prove preliminary important estimates on the linear diffusion equation \eqref{cauchy-diffusion}. Linear and semilinear Cauchy problems in the half-space $\R^N_+$ are discussed in Section \ref{s:cauchy}. We devote Section \ref{s:blowup} to the proof of the systematic blow-up when $0< \alpha < p_F - 1$, as stated in Theorem \ref{th:systematic}, whereas the critical case $\alpha=p_F-1$, as stated in Theorem \ref{th:systematic-1}, is considered in Section \ref{s:critical}. Finally, in Section \ref{s:extinction} we prove Theorem \ref{th:global}, that is the existence of global solutions when $\alpha > p_F - 1$.


\section{Notations and linear preliminary results}\label{s:basic}

Our conventions on the Fourier transform are the following. If $f \in \mathscr{S}(\R^N)$, the space of L. Schwartz rapidly decaying functions, or $f\in L^1(\R ^N)$, we define its Fourier transform $\FF(f) = \widehat f$ and its inverse Fourier transform $\FF^{-1}(f)$ by 
$$(\FF(f))(\xi) = \widehat f(\xi) := \int _{\R^N}{\rm e}^{-{\rm i}x\cdot \xi }f(x)dx, $$
and 
$$
(\FF^{-1}(f))(x) := (2\pi)^{-N} \int _{\R^N} {\rm e}^{{\rm i}x\cdot \xi }f(\xi)d\xi ,
$$
which means that $\FF^{-1} = (2\pi)^{-N}\FF^*$, where $\FF^*$ is the adjoint of $\FF$ in $L^2({\R}^N)$, after extending the definition of the Fourier transform to $L^2(\R^N)$. With this definition, we have, for $f,g\in L^1(\R^N)$,
$$
\FF(f*g) = \widehat{f*g} = \FF(f)\FF(g),
$$
and also the Plancherel formula
\begin{equation}\label{eq:Plancherel}
\int_{\R ^N} f(x)\overline g(x)dx =  (2\pi)^{-N} \int_{\R ^N} \widehat f(\xi)\overline{\widehat g}(\xi)d\xi,
\end{equation}
for $f$, $g \in L^2(\R ^N)$, which is equivalent to $\FF\FF^* = \FF^*\FF = (2\pi)^N I$ on $L^2(\R^N)$.

\subsection{$L^{\infty}$ and pointwise estimates for the linear diffusion equation}

This subsection is devoted to $L^{\infty}$ and pointwise estimates on solutions $v(t,x)$ of the linear diffusion equation \eqref{cauchy-diffusion} starting from an initial data $v_0$ which is odd in the $x_{N}$ variable. We start with  estimates of the Fourier transform of the initial data for small frequencies $\xi$. Recall that we have set
$$
m_{1}(v_{0}) := \int_{\R^N_{+}} x_{N} v_{0}(x) dx .
$$

\begin{lemma}[Fourier transform of the initial data]\label{claim} Let $v_{0} \in L^1(\R^N)$ be such that 
\begin{equation}\label{eq:v0-odd}
v_{0}(x) \geq 0\;\text{ for }\; x_{N} > 0, \qquad v_{0}(x',-x_{N}) = - v_{0}(x',x_{N}), \qquad 
m_{1}(v_{0}) < +\infty.
\end{equation}
Then, as $\xi \to 0$, we have (recall that $\xi=(\xi',\xi _N)$)
$$\Re({\widehat{v_0}}(\xi)) = o(\xi_{N}) \quad \text{ and } \quad  
\Im({\widehat{v_0}}(\xi)) = -\xi_N(2m_1(v_0)+o(1)),
$$
and thus $| \widehat{v_0}(\xi)| = | \xi _N| (2m_1(v_0)+o(1))$ as $\xi \to 0$.
\end{lemma}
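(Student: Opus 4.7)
The plan is to exploit the oddness of $v_0$ in the $x_N$ variable to collapse the Fourier integral into one over the half-space, and then to treat the real and imaginary parts separately by dominated convergence.

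First I would split $\widehat{v_0}(\xi) = \int_{\R^N} e^{-ix\cdot\xi} v_0(x) dx$ into integrals over $\{x_N > 0\}$ and $\{x_N < 0\}$, then change variables $x_N \mapsto -x_N$ in the second one. Using $v_0(x',-x_N) = -v_0(x',x_N)$, the two integrals combine to
$$
\widehat{v_0}(\xi) = \int_{\R^N_+} e^{-ix'\cdot\xi'}\bigl(e^{-ix_N\xi_N} - e^{ix_N\xi_N}\bigr) v_0(x)\, dx = -2i \int_{\R^N_+} e^{-ix'\cdot\xi'} \sin(x_N\xi_N) v_0(x)\, dx .
$$
Expanding $e^{-ix'\cdot\xi'} = \cos(x'\cdot\xi') - i\sin(x'\cdot\xi')$ yields
$$
\Re\bigl(\widehat{v_0}(\xi)\bigr) = -2 \int_{\R^N_+} \sin(x'\cdot\xi')\sin(x_N\xi_N) v_0(x)\, dx,
$$
$$
\Im\bigl(\widehat{v_0}(\xi)\bigr) = -2 \int_{\R^N_+} \cos(x'\cdot\xi')\sin(x_N\xi_N) v_0(x)\, dx.
$$

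Next, for $\xi_N \neq 0$ I would divide each expression by $\xi_N$ and use the elementary bound $\bigl|\sin(x_N\xi_N)/\xi_N\bigr| \leq |x_N|$, valid for every real $\xi_N$. Since $v_0 \geq 0$ on $\R^N_+$ and $m_1(v_0) < +\infty$, the function $x_N v_0(x)$ provides an integrable majorant on $\R^N_+$ and the dominated convergence theorem applies. For the imaginary part, $\cos(x'\cdot\xi') \to 1$ and $\sin(x_N\xi_N)/\xi_N \to x_N$ pointwise as $\xi \to 0$, hence
$$
\frac{\Im(\widehat{v_0}(\xi))}{\xi_N} \longrightarrow -2 \int_{\R^N_+} x_N v_0(x)\, dx = -2\, m_1(v_0),
$$
which gives the claimed expansion $\Im(\widehat{v_0}(\xi)) = -\xi_N(2m_1(v_0) + o(1))$. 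For the real part, $\sin(x'\cdot\xi') \to 0$ pointwise, so the same dominated convergence argument yields $\Re(\widehat{v_0}(\xi))/\xi_N \to 0$, i.e.\ $\Re(\widehat{v_0}(\xi)) = o(\xi_N)$. The degenerate case $\xi_N = 0$ is trivial since then $\sin(x_N\xi_N) \equiv 0$, so both parts vanish, which is consistent with both asymptotics.

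Finally, combining the two estimates gives $|\widehat{v_0}(\xi)|^2 = 4\, m_1(v_0)^2\, \xi_N^2 + o(\xi_N^2)$, hence $|\widehat{v_0}(\xi)| = |\xi_N|\bigl(2m_1(v_0) + o(1)\bigr)$. There is no serious obstacle in this proof: it is essentially a twofold application of the dominated convergence theorem, made possible by the oddness of $v_0$ (which kills the order-zero term, since $\int_{\R} v_0(x',x_N) dx_N = 0$ implies $\widehat{v_0}(\xi',0) = 0$) together with the first-moment bound $m_1(v_0) < +\infty$ (which furnishes the integrable majorant). The only point requiring a little care is ensuring the argument is uniform as $\xi'$ and $\xi_N$ tend to zero independently, which is handled by the joint bound $|\cos(x'\cdot\xi')\sin(x_N\xi_N)/\xi_N| \leq |x_N|$ and pointwise convergence.
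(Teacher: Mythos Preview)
Your proof is correct and follows essentially the same approach as the paper: both exploit the oddness of $v_0$ in $x_N$ to reduce the Fourier integral to expressions involving $\sin(x_N\xi_N)$, then apply the dominated convergence theorem with the majorant $|x_N v_0(x)|$ furnished by the hypothesis $m_1(v_0)<+\infty$. The only cosmetic difference is that you first fold the integral onto $\R^N_+$ and divide by $\xi_N$ before passing to the limit, whereas the paper works on all of $\R^N$ and splits the imaginary part into a main term plus remainders; the underlying idea and the key estimates are identical.
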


\begin{proof} We have
$$
\Re(\widehat{v_0}(\xi)) = \int _{\R^N} \cos (x\cdot \xi) v_0 (x)\, dx = 
-\int _{\R ^{N}}\sin(x'\cdot \xi ')\sin(x_N\xi_N)v_0(x)\,dx
$$
thanks to the fact that $v_{0}(x',-x_{N}) = -v_{0}(x',x_{N})$ according to \eqref{eq:v0-odd}. Since 
$$\int_{\R^N}\vert x_N v_{0}(x)\vert dx=2m_1(v_0) <+\infty, \quad\text{and}\quad
|\sin(x'\cdot \xi ')\sin(x_{N}\xi_{N})| \leq |x_{N}\xi_{N}| ,$$
using the Lebesgue dominated convergence theorem we deduce that, as $\xi \to 0$,
$\Re (\widehat{v_0}(\xi)) = o(\xi_{N})$. Similarly, for the imaginary part of $\widehat{v_{0}}(\xi)$ we write
\begin{eqnarray*}
-\Im(\widehat{v_0}(\xi)) & = & \int _{\R^N} \sin (x\cdot \xi) v_0 (x)\, dx = \int _{\R ^{N}}\cos(x'\cdot \xi ')\sin(x_N\xi_N)v_0(x)\,dx\\
& = & \int_{\R ^{N}} \sin(x_N\xi_N)v_0(x)\,dx +\int_{\R ^{N}} (\cos(x'\cdot \xi')-1))\sin(x_N\xi_N)v_0(x)\,dx\\
& = & \int_{\R ^{N}} \sin(x_N\xi_N)v_0(x)\,dx + o(\xi_{N}),
\end{eqnarray*}
thanks to the fact that $|(\cos(x'\cdot \xi') - 1)\sin(x_{N}\xi_{N})| \leq 2|x_{N}\xi_{N}|$, and then applying the Lebesgue dominated convergence theorem. We pursue with
\begin{eqnarray*}
-\Im(\widehat{v_0}(\xi)) & = & \int _{\R ^{N}} x_N\xi_N v_0(x)\,dx +\int _{\R ^{N}} (\sin(x_N\xi_N)-x_N\xi_N) v_0(x)\,dx + o(\xi_{N})\\
& = & \int _{\R ^{N}} x_N\xi_N v_0(x)\,dx + o(\xi_{N}) = \xi_{N}(2m_{1}(v_{0}) + o(1)),
\end{eqnarray*}
where we have used again the dominated convergence theorem. 
\end{proof}

Now, we estimate the rate of decay of the $L^{\infty}$ norm of the solution to the linear Cauchy problem \eqref{cauchy-diffusion} under consideration.

\begin{lemma}[Uniform estimate from above]\label{lem:decrease} There exists a positive constant $C$ such that, for any initial data $v_0$ satisfying \eqref{eq:v0-odd}, the solution of the Cauchy problem \eqref{cauchy-diffusion} satisfies
$$
\forall t >0, \qquad \|v(t,\cdot)\|_{L^\infty}\leq C\, \frac{m_1(v_0)+\|\widehat{v_0} \|_{L^{1}}}{(1+t)^{(N+1)/\beta}}.
$$
\end{lemma}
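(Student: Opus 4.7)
The plan is to represent $v(t)$ via Fourier inversion and then reduce everything to controlling a single integral in $\xi$. Since $v(t)=G(t)*v_{0}$ and $\widehat{G(t)}(\xi)=e^{t(J_0(\xi)-1)}$, taking Fourier transforms gives $\widehat{v(t)}(\xi)=e^{t(J_0(\xi)-1)}\widehat{v_0}(\xi)$, and the assumption $G(t)(\R^N)=1$ with $G(t)\geq0$ guarantees $|\widehat{G(t)}|\leq 1$, so that this exponent is nonpositive. One may assume $\|\widehat{v_0}\|_{L^1}<+\infty$, otherwise the lemma is trivial; Fourier inversion then yields the pointwise bound
$$\|v(t,\cdot)\|_{L^\infty} \;\leq\; (2\pi)^{-N}\int_{\R^N} e^{t(J_0(\xi)-1)}\,|\widehat{v_0}(\xi)|\,d\xi,$$
and the whole proof reduces to estimating this integral by a constant times $(1+t)^{-(N+1)/\beta}\bigl(m_1(v_0)+\|\widehat{v_0}\|_{L^1}\bigr)$.

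The key ingredient is a \emph{global} version of Lemma \ref{claim}. Revisiting its proof, but replacing the dominated-convergence refinement by the elementary bound $|\sin s|\leq|s|$, the oddness of $v_0$ in $x_N$ gives both $|\Re(\widehat{v_0}(\xi))|$ and $|\Im(\widehat{v_0}(\xi))|$ bounded by $2m_1(v_0)|\xi_N|$, hence
$$|\widehat{v_0}(\xi)|\;\leq\; C\,m_1(v_0)\,|\xi_N| \qquad\text{for every } \xi\in\R^N,$$
with an absolute constant $C$. This uniform bound is what will allow the sharper exponent $(N+1)/\beta$ (instead of the standard $N/\beta$): the extra factor $|\xi_N|$ supplies the extra algebraic decay after rescaling.

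Now fix $r>0$ small enough that \eqref{eq:J0} forces $J_0(\xi)-1\leq-(a/2)|\xi|^\beta$ on $\{|\xi|\leq r\}$, and split the integral at $|\xi|=r$. On the inner region, inserting the uniform bound above and performing the change of variables $\xi=t^{-1/\beta}\eta$ produces the prefactor $t^{-(N+1)/\beta}$ (one power $t^{-1/\beta}$ from $|\xi_N|$ and $N$ from $d\xi$) times the convergent integral $\int_{\R^N} e^{-(a/2)|\eta|^\beta}|\eta_N|\,d\eta$. On the outer region, assumption \eqref{eq:J0-infini} provides $\eta_0:=1-\sup_{|\xi|\geq r}J_0(\xi)>0$, so the contribution is majorised by $e^{-\eta_0 t}\|\widehat{v_0}\|_{L^1}$, which is itself $O\bigl((1+t)^{-(N+1)/\beta}\|\widehat{v_0}\|_{L^1}\bigr)$. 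For $t\in[0,1]$ the crude estimate $\|v(t,\cdot)\|_{L^\infty}\leq (2\pi)^{-N}\|\widehat{v_0}\|_{L^1}$ together with the fact that $(1+t)^{-(N+1)/\beta}$ is bounded below there handles the small-time range, and combining all the pieces delivers the claim.

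The main obstacle is the upgrade of Lemma \ref{claim} from an asymptotic statement as $\xi\to0$ into a genuinely global one whose constant does not depend on $v_0$: the form of the target bound (with a universal $C$) would be incompatible with any argument restricted to a $v_0$-dependent neighbourhood of the origin. Assumption \eqref{eq:J0-infini} is equally crucial, since without a uniform gap $J_0<1$ on $\{|\xi|\geq r\}$ there is no mechanism to tame the large-frequency contribution using only $\|\widehat{v_0}\|_{L^1}$; once both of these points are secured, the remainder is a transparent splitting and scaling argument.
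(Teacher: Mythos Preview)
Your proof is correct and follows essentially the same route as the paper: Fourier inversion, splitting the $\xi$-integral at a small radius $r$, using \eqref{eq:J0} on $\{|\xi|\leq r\}$ and \eqref{eq:J0-infini} on $\{|\xi|\geq r\}$. The only difference is that on the inner region the paper performs the same change of variable and then invokes Lemma~\ref{claim} together with dominated convergence to obtain the asymptotic $t^{(N+1)/\beta}f_1(t)\to C\,m_1(v_0)$, whereas you use the global pointwise bound $|\widehat{v_0}(\xi)|\leq C\,m_1(v_0)\,|\xi_N|$ directly; your version is marginally cleaner in that it yields the universal constant for all $t>0$ without passing through an asymptotic whose onset time might a priori depend on $v_0$.
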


\begin{proof} From the convolution formula \eqref{convolution}, the Plancherel formula and \eqref{fourier-kernel}, we have
\begin{eqnarray*}
(2\pi)^N v(t,x)&=&\int _{\R^{N}} {\rm e}^{t(J_0(\xi)-1)}
\overline{\FF(v_0(x-\cdot))}(\xi)\,d\xi = \int _{\R^{N}} {\rm e}^{t(J_0(\xi)-1)} {\rm e}^{ix\cdot \xi}\overline{\FF(v_0)(-\xi)}\,d\xi\\
&=&\int _{\R^{N}} {\rm e}^{t(J_0(\xi)-1)} {\rm e}^{ix\cdot \xi}\widehat{v_0}(\xi)\,d\xi.
\end{eqnarray*}
Thanks to \eqref{eq:J0}, we can fix $r > 0$ small enough such that
$$
| \xi| \leq r \quad \Longrightarrow \quad J_0(\xi)-1\leq -\frac{1}{2} a| \xi | ^{\beta}.
$$
On the other hand, using \eqref{eq:J0-infini}, we know that there exists $\delta > 0$ such that
$$
| \xi| \geq r \quad \Longrightarrow \quad J_0(\xi)-1\leq -\delta.
$$
As a result
\begin{equation}\label{couper-1}
(2\pi)^N | v(t,x)| \leq \int_{| \xi | \leq r} 
{\rm e}^{-\frac 12 at| \xi| ^{\beta}}| \widehat{v_0}(\xi)| \, d\xi 
+ {\rm e}^{-\delta t}\|\widehat {v_0}\|_{L^{1}}.
\end{equation}
Setting
$$f_1(t) := \int_{| \xi | \leq r} 
{\rm e}^{-\frac 12 at| \xi| ^{\beta}}| \widehat{v_0}(\xi)| \, d\xi,$$
we use the change of variable $z = \xi t^{\frac 1\beta}$ in the integral and thus
$$
t^{\frac N \beta}f_1(t)=\int _{| z| \leq rt^{ 1/\beta }} e^{-\frac 12 a| z| ^{\beta}} \left | \widehat{v_0} \left(\frac{z}{t^{1/\beta}}\right)\right |\,dz.
$$
Upon letting $t\to +\infty$ in the above equality, using Lemma \ref{claim} and the dominated convergence theorem, we infer that, as $t\to+\infty$,
\begin{equation}\label{f11}
t^{\frac N \beta}f_1(t)\sim \int _{\R^N} e^{-\frac 12 a| z| ^{\beta}}  \frac{| z_N| }{t^{1/\beta}}2 m_1(v_{0})\, dz=:\frac{C m_1(v_{0})}{t^{1/\beta}},
\end{equation}
for some constant $C > 0$ depending only on $a$, $N$. In view of \eqref{couper-1} and \eqref{f11}, the lemma is proved.
\end{proof}

An estimate from below on $v$ is more tricky to obtain. Indeed since $v(t,0) = 0$ for all times, we need to evaluate $v(t,\cdot)$ at an appropriate moving point, so that we may obtain a lower bound on the $L^\infty$ norm with the correct order, namely $t^{-(N+1)/\beta}$.

\begin{lemma}[Pointwise estimate from below]\label{lem:from-below} There exist two constants $\gamma > 0$ and $C= C(\gamma) > 0$ such that the following holds: for any $v_0 \in L^1(\R^N)$ satisfying \eqref{eq:v0-odd} and such that $\widehat{v_0}\in L^{1}(\R^{N})$, there is $t_0 > 0$ such that the solution of the Cauchy problem \eqref{cauchy-diffusion} satisfies (here ${\bf e}_{N} \in \R^N$ denotes the unit vector in the $x_{N}$ direction)
$$
v\left(t,\gamma t^{1/\beta}{\bf e}_N\right)\geq \frac{Cm_1(v_0)}{t^{(N+1)/\beta}},\quad \forall t\geq t_0.
$$
\end{lemma}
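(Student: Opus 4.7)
The plan is to represent $v(t, \gamma t^{1/\beta}{\bf e}_N)$ via the inverse Fourier transform exactly as in the proof of Lemma \ref{lem:decrease}, substitute $x = \gamma t^{1/\beta}{\bf e}_N$ (noting that then $x \cdot \xi = \gamma t^{1/\beta} \xi_N$), and perform the scaling $z = \xi t^{1/\beta}$. This yields
$$
(2\pi)^N t^{(N+1)/\beta}\, v\!\left(t, \gamma t^{1/\beta}{\bf e}_N\right) = \int_{\R^N} \mathrm{e}^{t(J_0(zt^{-1/\beta})-1)}\, \mathrm{e}^{\mathrm{i}\gamma z_N}\, \Bigl[ t^{1/\beta}\, \widehat{v_0}(zt^{-1/\beta}) \Bigr]\, dz.
$$
The goal is to let $t\to +\infty$ inside the integral, using Lemma \ref{claim} (which gives $t^{1/\beta}\widehat{v_0}(zt^{-1/\beta}) \to -2\mathrm{i}\, m_1(v_0)\, z_N$ pointwise) together with \eqref{eq:J0} (which gives $\mathrm{e}^{t(J_0(zt^{-1/\beta})-1)} \to \mathrm{e}^{-a|z|^\beta}$ pointwise).

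To justify dominated convergence, I would first establish a uniform bound $|\widehat{v_0}(\xi)| \leq 2m_1(v_0)\,|\xi_N|$ on all of $\R^N$: by the oddness of $v_0$ in $x_N$ one has $\widehat{v_0}(\xi',0) \equiv 0$, and differentiation under the integral sign gives $|\partial_{\xi_N}\widehat{v_0}(\xi)| \leq \|x_N v_0\|_{L^1} = 2m_1(v_0)$ (using once more the oddness and nonnegativity on the half-space), so the mean value theorem applied between $(\xi',0)$ and $(\xi',\xi_N)$ gives the claim. I would then split the integral at $|\xi|=r$ where $r$ is the radius furnished by \eqref{eq:J0} and \eqref{eq:J0-infini}, exactly as in Lemma \ref{lem:decrease}. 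On $\{|\xi|\leq r\}$ the integrand is dominated by $2m_1(v_0)\,|z_N|\,\mathrm{e}^{-\frac12 a|z|^\beta}$, which is integrable on $\R^N$ and independent of $t$. On $\{|\xi|>r\}$ one has $\mathrm{e}^{t(J_0-1)} \leq \mathrm{e}^{-\delta t}$, and the corresponding contribution is bounded by $t^{1/\beta}\mathrm{e}^{-\delta t}\|\widehat{v_0}\|_{L^1}$, which tends to $0$.

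Passing to the limit, the imaginary part vanishes by oddness of $z_N\cos(\gamma z_N)$ in $z_N$, leaving
$$
(2\pi)^N \lim_{t\to+\infty} t^{(N+1)/\beta}\, v\!\left(t,\gamma t^{1/\beta}{\bf e}_N\right) = 2 m_1(v_0)\, h(\gamma), \qquad h(\gamma) := \int_{\R^N} z_N \sin(\gamma z_N)\, \mathrm{e}^{-a|z|^\beta}\, dz.
$$
Since $h(0)=0$ and $h'(0) = \int_{\R^N} z_N^2\, \mathrm{e}^{-a|z|^\beta} dz > 0$, one has $h(\gamma)>0$ for all $\gamma>0$ sufficiently small. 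I would fix such a $\gamma$, set $C := h(\gamma)/(2\pi)^N > 0$, and use the definition of the limit to obtain $t_0$ with $v(t,\gamma t^{1/\beta}{\bf e}_N) \geq C m_1(v_0) t^{-(N+1)/\beta}$ for $t\geq t_0$.

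The main technical obstacle is the uniform-in-$t$ control needed for dominated convergence; everything hinges on the pointwise bound $|\widehat{v_0}(\xi)| \leq 2m_1(v_0)|\xi_N|$, which is what makes the factor $t^{1/\beta}$ compatible with the $\mathrm{e}^{-a|z|^\beta/2}$ Gaussian-type weight after rescaling. The positivity of $h(\gamma)$ for small $\gamma$ is the only other place where a genuine choice has to be made; it is built in through the infinitesimal expansion of $\sin$ near $0$.
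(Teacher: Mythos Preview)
Your argument is correct and essentially identical to the paper's: Fourier representation of $v(t,\gamma t^{1/\beta}{\bf e}_N)$, the scaling $z=\xi t^{1/\beta}$, a split at $|\xi|=r$ with dominated convergence on the near part and exponential decay on the far part via \eqref{eq:J0-infini}, and finally positivity of $\int_{\R^N} z_N\sin(\gamma z_N)\,{\rm e}^{-a|z|^\beta}dz$ for small $\gamma$. The only (cosmetic) differences are that you obtain the domination bound $|\widehat{v_0}(\xi)|\leq 2m_1(v_0)|\xi_N|$ globally via the mean value theorem, whereas the paper only uses the local version from Lemma~\ref{claim}, and that the far-part contribution to $(2\pi)^N t^{(N+1)/\beta}v$ is actually $O(t^{(N+1)/\beta}{\rm e}^{-\delta t}\|\widehat{v_0}\|_{L^1})$ rather than $O(t^{1/\beta}{\rm e}^{-\delta t}\|\widehat{v_0}\|_{L^1})$ --- an immaterial slip.
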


\begin{proof} Thanks to the Lebesgue dominated convergence theorem, for $\gamma > 0$ we have
\begin{equation}\label{eq:C1-gamma}
C_1(\gamma) := \int _{\R^N}{\rm e}^{-a| z| ^{\beta}}z_N\sin(\gamma z_N)dz\sim \gamma \int _{\R ^N} {\rm e}^{-a| z| ^{\beta}}z_N^2 dz \quad \text{ as } \gamma \to 0,
\end{equation}
so that there exists $\gamma_{*} > 0$ small enough such that for $0 < \gamma \leq \gamma_{*}$ we have  $C_1(\gamma) > 0$. In the remainder of this proof we assume that $0 < \gamma \leq \gamma_{*}$.

For a function $b := b(t)>0$ to be chosen appropriately later on, we compute
$$
v(t,b(t){\bf e}_N)= \int _{\R ^N}\!\!\! G(t,y)\,v_0\left(b(t){\bf e}_N - y\right)dy = 
(2\pi)^{-N}\int _{\R^N} \!\!\! {\rm e}^{t(J_0(\xi)-1)} {\rm e}^{ib(t)\xi _N}\widehat{v_0} (\xi)\, d\xi,
$$
where, as above, we have used the Plancherel formula \eqref{eq:Plancherel} and $\overline{\widehat{v_0}(-\xi)}=\widehat{v_0}(\xi)$. 
Since $J_{0}(\xi)$ and $v(t,b(t)e_N)$ are real valued, this reduces to
$$
(2\pi)^N v(t,b(t){\bf e}_N) \!= \!\!\int _{\R^N}\!\!\! {\rm e}^{t(J_0(\xi)-1)} 
\!\Big[\cos(b(t)\xi_N) \Re(\widehat{v_0}(\xi)) 
- \sin (b(t)\xi _N) \Im(\widehat{v_0}(\xi))\Big]d\xi.
$$
Thanks to the property \eqref{eq:J0} and Lemma \ref{claim}, we can fix $r > 0$ small enough such that
\begin{equation}\label{pres}
| \xi| \leq r \quad \Longrightarrow \quad J_0(\xi)-1\leq - \frac 12 a| \xi | ^{\beta},
\end{equation}
and
\begin{equation}\label{pres2}
| \xi| \leq r \quad\Longrightarrow\quad |\widehat{v_0}(\xi)| \leq 3 m_1| \xi_N|.
\end{equation}
On the other hand, thanks to \eqref{eq:J0-infini}, there is $\delta >0$ such that
\begin{equation}
\label{loin}
| \xi| \geq r \quad\Longrightarrow \quad J_0(\xi)-1\leq -\delta.
\end{equation}
Now, we write $(2\pi)^N v(t,b(t){\bf e}_N) = f_{1}(t) + f_{2}(t)$ where we wet first
\begin{equation}\label{couper}
f_{1}(t) := \int _{|\xi| \leq r} \!\!\! {\rm e}^{t(J_0(\xi)-1)} 
\Big[\cos(b(t)\xi_N) \Re(\widehat{v_0} (\xi)) 
- \sin (b(t)\xi _N) \Im(\widehat{v_0}(\xi))\Big]\,d\xi
\end{equation}
so that 
$$f_{2}(t) := \int _{|\xi| > r} {\rm e}^{t(J_0(\xi)-1)} 
\Big[\cos(b(t)\xi_N) \Re(\widehat{v_0}(\xi)) 
- \sin (b(t)\xi _N) \Im(\widehat{v_0}(\xi))\Big]\,d\xi.$$
Using \eqref{loin} we infer that
\begin{equation}\label{f2}
| f_2(t)|  \leq  2\int _{| \xi| > r}  \E | \widehat{v_0}(\xi)|\, d\xi
\leq  2e^{-\delta t} \|\widehat {v_0} \Vert_{L^{1}}.
\end{equation}
Next, we use the change of variable $z =t^{1/\beta} \xi$ in $f_1(t)$ and we obtain
\begin{eqnarray*}
t^{N/\beta}f_1(t)&=&\int _{| z| \leq rt^{ 1/\beta }} 
{\rm e}^{t\left(J_0(t^{-1/\beta}z)-1\right)} 
\Big[\cos(t^{-1/\beta}b(t) z_{N}) \,
\Re(\widehat{v_0}(t^{-1/\beta}z)) \\
&&\qquad\qquad\qquad\qquad\qquad\qquad
-\sin(t^{-1/\beta}b(t)z_{N}) \, 
\Im(\widehat{v_0}(t^{-1/\beta}z))\Big]\,dz.
\end{eqnarray*}
At this point one sees that making the appropriate choice $b(t) := \gamma t^{1/\beta}$ we have
\begin{eqnarray}
\hskip 1cm t^{N/\beta}f_1(t) &=&
\int _{| z| \leq rt^{ 1/\beta }}
{\rm e}^{t\left(J_0(t^{-1/\beta} z) - 1\right)} 
\Big[\cos\left(\gamma z_N\right) \, 
\Re(\widehat{v_0}(t^{-1/\beta}z))
\label{morceau-proche-zero}\\
&&\qquad\qquad\qquad\qquad\qquad\qquad
-\sin \left(\gamma z_N\right) \, 
\Im(\widehat{v_0}(t^{-1/\beta}z))\Big]\,dz.\nonumber
\end{eqnarray}
We now let $t\to +\infty$ in \eqref{morceau-proche-zero}: since we have \eqref{pres} and \eqref{pres2}, we can use the Lebesgue dominated convergence theorem so that we obtain, thanks to \eqref{eq:C1-gamma} and Lemma \ref{claim},
\begin{equation}\label{f1}
t^{N/\beta}f_1(t)\sim 
\int _{\R^N} {\rm e}^{-a| z| ^{\beta}}
\sin(\gamma z_N) t^{-1/\beta}z_{N}\, m_1(v_{0})\, dz = 
C_1(\gamma) t^{-1/\beta} m_{1}(v_{0}),
\end{equation}
as $t\to+\infty$. In view of \eqref{couper}, \eqref{f2} and \eqref{f1}, the lemma is proved.
\end{proof}

\subsection{$L^1$ estimates for the linear diffusion equation} For the proof of systematic blow-up in the critical case $\alpha=\beta/(N+1)$, namely Theorem \ref{th:systematic-1}, we will need further estimates, in particular of the $L^1$-type. It is customary to obtain $L^1$-type estimates for the solutions of a diffusion equation such as
$$\partial_{t} v = \Delta v \quad\text{in }\, (0,+\infty)\times \R^N,
\qquad v(0,x) = v_{0}(x),$$
by considering first the case $v_{0} \geq 0$ and by multiplying the equation by a truncation function $\zeta_{R}(x) := \rho(x/R)$, where $\rho \in C^\infty_{c}(\R^N)$ is such that $\mathbf 1_{\{|x| \leq 1\}} \leq \rho \leq \mathbf 1_{\{|x| \leq 2\}}$, to obtain, after integrating by parts and then over $[0,T]$,
$$\int_{\R^N}v(T,x)\zeta_{R}(x)dx = 
\int_{\R^N}v_{0}(x)\zeta_{R}(x)dx 
+ \int_{0}^T\!\!\!\!\int_{\R^N}v(t,x) \Delta \zeta_{R}(x)\,dxdt.$$
Then, since $|\Delta\zeta_{R}(x)| \leq cR^{-2}\mathbf 1_{\{R\leq |x| \leq 2R\}}$ for some constant $c>0$, one deduces that the $L^1$ norm of $v(t,\cdot)$ is bounded (in fact constant, in this case). The same can be done with the norm of $x_{N}v$ in $L^1$. This is made possible because the Laplacian $\Delta$ is a local operator and obviously we have
\begin{equation}\label{analogous1}
\Delta(x_{N}\zeta_{R}) \equiv 0 \qquad\text{for }\, |x| \geq  2R,
\end{equation}
and 
\begin{equation}\label{analogous2}
\Delta(x_{N}\zeta_{R}) \leq c |x_{N}| R^{-2} \qquad \text{for }\, |x| \leq 2R.
\end{equation}
However, for a non-local operator,  in particular when dealing with an equation in a half-space, we will have to use the following estimates involving the operator $A$.

\begin{lemma}[Estimates to substitute to \eqref{analogous1} and \eqref{analogous2} in the general case]\label{lem:A-zeta-R}
Assume that $A$ satisfies the conditions of Theorem \ref{th:systematic-1}. 
Let $\rho \in C_c^\infty(\R)$ be such that 
$$\mathbf 1_{[-1,+1]} \leq \rho \leq \mathbf 1_{[-2,+2]},\qquad \rho(-s) = \rho(s),$$
and define the truncation function $\zeta_{R}$ for $R >0$ by setting
$$ \zeta _R(x) := \rho\left(\frac{|x|}{R}\right).$$
Then for a constant $C > 0$ independent of $R$ we have
\begin{eqnarray}
& x_{N}\left(A\left(x_{N}\zeta_{R}\right)\right) \geq 0 & \qquad \text{for }\, |x| \geq 2R \label{eq:xN-A-zeta-1} \\
& \left|\left(A\left(x_{N}\zeta_{R}\right)\right)\right| \leq C\, \displaystyle\frac{|x_{N}|}{R^\beta}
&\qquad \text{for }\, x \in \R^N .\label{eq:xN-A-zeta-2}
\end{eqnarray}
\end{lemma}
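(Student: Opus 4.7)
The plan is to treat the three operators $A$ uniformly by exploiting a common algebraic structure. Set $\psi:=x_N\zeta_R$. Since $\psi$ is odd in the $x_N$ variable and in each case $A$ commutes with the reflection $(x',x_N)\mapsto(x',-x_N)$ (obvious for $A=\Delta$ and $A=-(-\Delta)^{\beta/2}$, while for $A=J*u-u$ this is precisely the even-in-$x_N$ hypothesis on $J$), the image $A\psi$ is also odd in $x_N$ and in particular vanishes on $\{x_N=0\}$. Consequently, by the fundamental theorem of calculus,
\[
|A\psi(x)|\;\leq\;|x_N|\,\|A\partial_N\psi\|_{L^\infty(\R^N)},
\]
which reduces the proof of~\eqref{eq:xN-A-zeta-2} to establishing the uniform bound $\|A\partial_N\psi\|_{L^\infty}\leq C R^{-\beta}$, with $C$ independent of $R$.

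To prove this estimate I would proceed by Fourier analysis. Setting $\rho_0(y):=\rho(|y|)\in\mathscr S(\R^N)$, one has $\widehat{\zeta_R}(\xi)=R^N\widehat{\rho_0}(R\xi)$, hence $\widehat\psi(\xi)=iR^{N+1}(\partial_N\widehat{\rho_0})(R\xi)$, and
\[
\widehat{A\partial_N\psi}(\xi)\;=\;-R^{N+1}\bigl(J_0(\xi)-1\bigr)\,\xi_N\,(\partial_N\widehat{\rho_0})(R\xi),
\]
where $J_0(\xi)-1$ equals $-|\xi|^2$, $-|\xi|^\beta$, or $\widehat J(\xi)-1$ in the three respective cases. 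Inverting the Fourier transform, changing variables via $\eta:=R\xi$, and bringing the absolute value inside, the estimate reduces to controlling $(2\pi)^{-N}\int_{\R^N}|J_0(\eta/R)-1||\eta_N||(\partial_N\widehat{\rho_0})(\eta)|d\eta$. Splitting the domain at $|\eta|=\epsilon R$, for some $\epsilon>0$ small enough that $|J_0(\xi)-1|\leq 2a|\xi|^\beta$ on $\{|\xi|\leq\epsilon\}$ (automatic in the first two cases and provided by~\eqref{eq:J0} in the third), the low-frequency contribution is at most $C R^{-\beta}\int|\eta|^{\beta+1}|\partial_N\widehat{\rho_0}(\eta)|d\eta<+\infty$, while the high-frequency contribution decays faster than any power of $R^{-1}$ thanks to the Schwartz decay of $\widehat{\rho_0}$ together with the global bound $|J_0-1|\leq C$.

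For the sign condition~\eqref{eq:xN-A-zeta-1}, observe that $\psi(x)=0$ whenever $|x|\geq 2R$. The Laplacian case is then immediate by locality. For the two nonlocal operators one has the representation $A\psi(x)=\int_{\R^N}K(x-y)\psi(y)\,dy$, with $K(z)=c_{N,\beta}|z|^{-(N+\beta)}$ in the fractional case and $K=J$ in the convolution case. Exploiting the oddness of $\psi$ in $y_N$ and setting $y^*:=(y',-y_N)$,
\[
A\psi(x)\;=\;\int_{\{y_N>0\}}\bigl[K(x-y)-K(x-y^*)\bigr]\,\psi(y)\,dy.
\]
Since $x-y$ and $x-y^*$ share the transverse component $x'-y'$ and differ only in the $N$-th slot through $x_N-y_N$ versus $x_N+y_N$, for $y_N>0$ one has $|x_N-y_N|\leq|x_N+y_N|$ iff $x_N\geq 0$; combined with the monotonicity of $K$ in $|z_N|$ at fixed $z'$ (automatic for the rotationally symmetric kernel $|z|^{-(N+\beta)}$, and exactly the hypothesis~\eqref{eq:Cond-J-cri} for $K=J$), this forces $K(x-y)-K(x-y^*)$ to have the sign of $x_N$. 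Multiplied by $\psi(y)\geq 0$ on $\{y_N>0\}$, the integrand has the sign of $x_N$, giving $x_N A\psi(x)\geq 0$. The main obstacle is the Fourier bound in the convolution case, since only the asymptotic near $\xi=0$ is given for $\widehat J-1$; the decomposition at $|\eta|=\epsilon R$ together with the Schwartz decay of $\widehat{\rho_0}$ (made possible by the hypothesis $\rho\in C_c^\infty$) is what resolves it.
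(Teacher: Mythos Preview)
Your proof is correct and closely parallels the paper's. For \eqref{eq:xN-A-zeta-1} the argument is essentially identical to the paper's (exploit the oddness of $\psi$ in $y_N$ and the monotonicity of the kernel in $|z_N|$). For \eqref{eq:xN-A-zeta-2} both proofs land on the same Fourier-side integral, but extract the factor $|x_N|$ by different mechanisms: the paper uses the oddness of $\widehat{\psi_R}$ in $\xi_N$ to replace $e^{ix_N\xi_N}$ by $i\sin(x_N\xi_N)$ and then bounds $R|\sin(R^{-1}x_N\xi_N)|\leq |x_N\xi_N|$, whereas you use the fundamental theorem of calculus together with the commutation $\partial_N(A\psi)=A(\partial_N\psi)$. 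Your explicit splitting at $|\eta|=\epsilon R$ is in fact a bit more careful than the paper's informal appeal to dominated convergence.

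One minor imprecision: the ``global bound $|J_0-1|\leq C$'' you invoke for the high-frequency piece fails when $A$ is the Laplacian or the fractional Laplacian, since there $J_0(\xi)-1=-|\xi|^\beta$ is unbounded. This is harmless, however, because in those two cases the identity $|J_0(\xi)-1|=|\xi|^\beta$ holds for \emph{all} $\xi$, so your low-frequency estimate already covers the entire integral and no splitting is needed. The splitting is genuinely required only in the convolution case $A u=J*u-u$, and there $|\widehat J-1|\leq \|J\|_{L^1}+1=2$ is indeed bounded, so your tail argument goes through.
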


\begin{proof}
Since the case of the Laplacian (for which $\beta=2$) is clear from \eqref{analogous1} and \eqref{analogous2}, we may only consider the case of the operator $A$ being given by $Av = J*v - v$ which, up to working in the principal value sense, also covers the case of the fractional Laplacian $-(-\Delta)^{\beta/2}$ by selecting $J(z)=\frac{C}{\vert z\vert^{N+\beta}}$ where $C=C(\beta,N)>0$ is a known constant. 

We begin by noting that thanks to \eqref{eq:Cond-J-cri}, and the fact that $z_{N} \mapsto J(z',z_{N})$ is even for any fixed $z' \in \R^{N-1}$, for any $x,y \in \R^N$ with $x_{N} \geq 0$, $y_{N} \geq 0$ we have
\begin{equation}\label{eq:Cond-J-cri-2}
J(x'-y',x_N-y_N) - J(x'-y',x_N+y_N)+J(x'+y',x_N-y_N)-J(x'+y',x_N+y_N) \geq 0.
\end{equation}
For clarity of the exposition, let $\phi(x) := x_{N}$. Then for $|x| \geq 2R$ we have 
$$A(\phi\zeta_{R}) = J*(\phi\zeta_{R}) - \phi\zeta_{R} = J*(\phi\zeta_{R})$$ 
and since $\phi(-y)\zeta_{R}(-y) = -\phi(y)\zeta_{R}(y)$ we have
$$A(\phi\zeta_{R})(x) = \int_{\R^N}J(x-y)\phi(y)\zeta_{R}(y)dy = -\int_{\R^N}J(x+y)\phi(y)\zeta_{R}(y)dy.$$
Hence, for $|x| \geq 2R$,  we may write
\begin{eqnarray}
A(\phi\zeta_{R}) (x)& = & 
\frac{1}{2} \int_{\R^N}\left[J(x-y) - J(x+y)\right]\phi(y)\zeta_{R}(y)dy \nonumber\\
&=&\frac{1}{2} \int_{\vert y\vert <2R}\left[J(x'-y',x_N-y_N) - J(x'+y',x_N+y_N)\right]\phi(y)\zeta_{R}(y)dy \nonumber\\
 & = &
\frac{1}{2}\int_{[|y|<2R]\cap[y_{N} > 0]} [J(x'-y',x_N-y_N) -J(x'-y',x_N+y_N)\label{eq:A-phi-zeta} \\
&&\qquad\qquad +J(x'+y',x_N-y_N)-J(x'+y',x_N+y_N)]\phi(y)\zeta_{R}(y)dy. \nonumber
\end{eqnarray}
Now, using \eqref{eq:Cond-J-cri-2}, we see that, when $x_{N} \geq 0$, the function under the integral sign on the right hand side of \eqref{eq:A-phi-zeta} is nonnegative, so that $A(\phi\zeta_{R})(x) \geq 0$ when $x_{N} \geq 0$ and $|x|\geq 2R$. However, \eqref{eq:A-phi-zeta} shows also that $A(\phi\zeta_{R})$ is odd with respect to $x_N$ on $[|x| \geq  2R]$, so that we may infer that $\phi(x) A(\phi\zeta_{R})(x) \geq 0$ when $|x| \geq 2R$, and the proof of \eqref{eq:xN-A-zeta-1} is complete.
\medskip

In order to show \eqref{eq:xN-A-zeta-2} let us denote 
$$\zeta(x) := \rho(|x|),\quad\text{so that }\, \zeta_{R}(x) = \zeta(x/R), \qquad \psi_{R}(x) := x_{N}\zeta_{R}(x).$$
We know that
$$\mathscr{F}(\zeta_{R})(\xi) = R^N\mathscr{F}(\zeta)(R\xi), \qquad
\mathscr{F}(\psi_{R})(\xi) = {\rm i}\frac{\partial}{\partial \xi_{N}}\mathscr{F}(\zeta_{R})(\xi),$$
and as a consequence we have 
\begin{equation}\label{eq:Fourier-psi}
\widehat{\psi_{R}}(\xi) = {\rm i}R^{N+1}\left(\frac{\partial}{\partial \xi_{N}}\widehat{\zeta}\right)(R\xi),
\end{equation}
and we note that $\xi_{N} \mapsto \widehat{\psi_{R}}(\xi',\xi_{N})$ is odd. 

Now, using the fact that $\mathscr{F}(A(\psi_{R})) =\mathscr{F}(J*\psi_{R} - \psi_{R})= (\widehat{J}(\xi) - 1)\widehat{\psi_{R}}(\xi)$, as well as the fact that $\widehat{\psi_{R}}$ is odd with respect to $\xi _N$ and the property \eqref{eq:Fourier-psi} we have\begin{eqnarray*}
(2\pi)^N A(\psi_{R}) (x) & = &
\int_{\R^N}\!\!\!\! {\rm e}^{{\rm i}x\cdot\xi} (\widehat{J}(\xi) - 1)\widehat{\psi_{R}}(\xi) d\xi 
\\
&=& {\rm i}\int_{\R^N}\!\!\!
{\rm e}^{{\rm i}x'\cdot\xi'}\sin(x_{N}\cdot\xi_{N}) (\widehat{J}(\xi) - 1)\widehat{\psi_{R}}(\xi) d\xi
\\
& = &
- R^{N+1} \int_{\R^N}\!\!\!\!
{\rm e}^{{\rm i}x'\cdot\xi'}\sin(x_{N}\cdot\xi_{N}) (\widehat{J}(\xi) - 1)\left(\frac{\partial}{\partial\xi_{N}} \widehat{\zeta}\right)(R\xi) d\xi \\
& = &
- R \int_{\R^N}\!\!\!\!
{\rm e}^{{\rm i}R^{-1} x'\cdot\xi'}\sin(R^{-1}x_{N}\cdot\xi_{N})  (\widehat{J}(R^{-1}\xi) - 1)\left(\frac{\partial}{\partial\xi_{N}} \widehat{\zeta}\right)(\xi) d\xi.
\end{eqnarray*}
Since on the one hand $R |\sin(R^{-1}x_{N}\cdot \xi_{N})| \leq |x_{N}\xi_{N}|$, and on the other hand, for a fixed $\xi \in \R^N$, thanks to \eqref{eq:J0} we have 
$$\widehat{J}(R^{-1}\xi) - 1 = - aR^{-\beta}\vert \xi\vert + o(R^{-\beta})$$ 
as $R \to +\infty$, using the Lebesgue dominated convergence theorem and the fact that $\frac{\partial}{\partial \xi _{N}}\widehat{\zeta} \in \mathscr{S}(\R^N)$, we deduce that 
$$(2\pi)^N |A(\psi_{R})(x)| \leq C |x_{N}| R^{-\beta},$$
as $R \to +\infty$. Thus the proof of \eqref{eq:xN-A-zeta-2} is complete.
\end{proof}

We conclude these preliminaries with the following.

\begin{lemma}[Taking advantage of the self-adjointess]\label{lem:xN-Au}
Assume that $A$ satisfies the conditions of Theorem \ref{th:systematic-1}. 
Let $u \in L^1(\R^N)$ be such that 
$$\int_{\R^N}|x_{N}u(x)| dx < +\infty, \qquad
\int_{\R^N}|x_{N}Au(x)| < +\infty.$$
Then we have
$$\int_{\R^N}x_{N}(Au)(x) dx = 0.$$
\end{lemma}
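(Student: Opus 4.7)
My plan is to exploit the self-adjointness of $A$ (noted in Assumption \ref{ass:diffusion}) together with the fact that $A$ annihilates linear functions: for the three operators allowed in Theorem \ref{th:systematic-1}, one checks directly that ``$A(x_N) = 0$'' (for $Au = J*u - u$ this is because $\int J = 1$ and $J$ is even in $x_N$, for $A = \Delta$ this is trivial, and for the fractional Laplacian it corresponds to the symbol $-|\xi|^\beta$ vanishing at the origin). Thus, at a formal level, $\int x_N\,Au\,dx = \langle Au, x_N\rangle = \langle u, Ax_N\rangle = 0$. The content of the lemma is the rigorous justification of this pairing, which I would do by truncating $x_N$ and invoking Lemma \ref{lem:A-zeta-R}.

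The concrete steps are as follows. Using the cut-off $\zeta_R$ of Lemma \ref{lem:A-zeta-R}, set $\psi_R(x) := x_N\zeta_R(x)$, which is a bounded, compactly supported, and (in the nonlocal cases) smooth enough test function. First, I would write
\begin{equation*}
I_R := \int_{\R^N} \psi_R(x)\,(Au)(x)\,dx,
\end{equation*}
which is well-defined since $\psi_R$ is bounded with compact support and, by hypothesis, $x_N Au\in L^1(\R^N)$. Since $|\psi_R(x)(Au)(x)|\leq|x_N\,Au(x)|\in L^1(\R^N)$ and $\psi_R(x)\to x_N$ pointwise as $R\to+\infty$, the Lebesgue dominated convergence theorem gives
\begin{equation*}
\lim_{R\to+\infty} I_R = \int_{\R^N} x_N (Au)(x)\,dx.
\end{equation*}

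Next, by the self-adjointness of $A$ (valid in each of the three cases: for $A=\Delta$ by integration by parts against the $C_c^\infty$ function $\psi_R$; for the convolution operator by a direct Fubini argument using the evenness of $J$; for the fractional Laplacian by the real symbol $-|\xi|^\beta$, after a standard density/approximation argument), we have
\begin{equation*}
I_R = \int_{\R^N} u(x)\,(A\psi_R)(x)\,dx.
\end{equation*}
Now I invoke the pointwise estimate \eqref{eq:xN-A-zeta-2}: $|A\psi_R(x)|\leq C|x_N|/R^\beta$ uniformly in $x$. Hence
\begin{equation*}
|I_R| \leq \frac{C}{R^\beta}\int_{\R^N} |x_N u(x)|\,dx \xrightarrow[R\to+\infty]{} 0,
\end{equation*}
since $x_N u\in L^1(\R^N)$ by assumption. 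Combining the two limits yields $\int_{\R^N} x_N (Au)(x)\,dx = 0$.

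The main potential obstacle is the rigorous justification of the self-adjointness identity $\int \psi_R Au = \int u\,A\psi_R$ when $Au$ is only known through the integrability hypothesis $x_N Au\in L^1$ and not a priori as a nice function — in particular for the fractional Laplacian, where some care is needed to interpret $Au$ (e.g.\ as a tempered distribution, or via a principal value if $u$ has enough regularity). In practice this can always be handled either by a mollification argument on $u$, passing to the limit after using the bound from Lemma \ref{lem:A-zeta-R}, or by observing that for $Au = J*u - u$ with $J$ even the identity is an immediate Fubini computation. Once this identity is in hand, the proof reduces to the two dominated-convergence arguments above.
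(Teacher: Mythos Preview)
Your proof is correct and is essentially identical to the paper's: both truncate with $\psi_R(x)=x_N\zeta_R(x)$, use self-adjointness to move $A$ onto $\psi_R$, invoke the bound \eqref{eq:xN-A-zeta-2} to get $|I_R|\leq CR^{-\beta}\int|x_Nu|\to 0$, and apply dominated convergence on the other side. The paper does not dwell on the rigorous justification of the self-adjointness identity in the fractional case, so your brief discussion of that point is, if anything, a slight addition.
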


\begin{proof}
Indeed, with the notations used in the proof of the above lemma, since $A$ is self-adjoint, we have
$$\int_{\R^N}x_{N}\zeta_{R}(x)(Au)(x)dx = \int_{\R^N}u(x)(A\psi_{R})(x)dx.$$
However using \eqref{eq:xN-A-zeta-2} we have $|(A\psi_{R})(x)| \leq C |x_{N}|R^{-\beta}$, and thus the right hand side of the above equality converges to zero as $R \to +\infty$. On the other hand it is clear that we have
$$0 =  \lim_{R\to +\infty}\int_{\R^N}x_{N}\zeta_{R}(x)(Au)(x)dx = \int_{\R^N}x_{N}(Au)(x)dx$$
since $\int_{\R^N}|x_{N}(Au)(x)|dx < +\infty$.
\end{proof}

\section{Cauchy problems in the half-space}\label{s:cauchy}

\subsection{The linear diffusion equation in $\R^N_+$} The resolution of the linear diffusion equation in the half-space $\R_{+}^N$, in particular in the case of a non local operator such as $A u := J*u - u$, is made possible thanks to   Assumption \ref{ass:diffusion}.

More precisely, for a function $w_0 \in L^1(\R_{+}^N)$ we define ${\widetilde w}_{0} \in L^1(\R^N)$ as being
\begin{equation}\label{eq:Odd-extension}
{\widetilde w}_{0}(x) := \mathbf 1_{\{x_{N} > 0\}}(x)w_{0}(x',x_{N}) - \mathbf 1_{\{x_{N} < 0\}}(x) w_{0}(x',-x_{N}).
\end{equation}
Then, thanks to Assumption \ref{ass:diffusion}, the function
$${\widetilde w}(t,\cdot) := G(t)*{\widetilde w}_{0}$$
is well-defined, solves the equation
\begin{equation}\label{eq:Cauchy-Space}
\begin{cases}
\partial_{t} {\widetilde w} = A{\widetilde w} &\text{ in } (0,+\infty)\times \R^{N}\\
{\widetilde w}(0,\cdot) = {\widetilde w}_0 &\text{ in } \R ^{N},\\
\end{cases}
\end{equation}
and satisfies
$${\widetilde w}(t,x',-x_{N}) = - {\widetilde w}(t,x',x_{N}).$$
Now we may define the solution of the evolution equation
\begin{equation}\label{eq:Cauchy-Half-Space}
\begin{cases}
\partial_{t} w = Aw &\text{ in } (0,+\infty)\times \R ^{N}_+\\
w(0,\cdot) = w_{0} &\text{ in } \R ^{N}_+\\
w(t,x',0) = 0 &\text{ on } \partial \R ^{N}_+,
\end{cases}
\end{equation}
as being
\begin{equation}\label{eq:Def-Sol-Half-Space}
\forall x_{N} >0, \qquad w(t,x',x_{N}) := {\widetilde w}(t,x',x_{N}).
\end{equation}

Thus, when the kernel $G(t)$ is given by a function belonging to $L^1(\R^N)$, including the case of the fractional Laplacian up to working in the principal value sense and the case of the convolution operators as in Example \ref{ex:nonreg} up to a harmless Dirac mass (see \eqref{G-measure}), the solution $w$ of \eqref{eq:Cauchy-Half-Space} can be written as 
\begin{equation}
\label{eq:Def-Sol-Half-Space-Conv}
\forall (t,x) \in (0,+\infty)\times\R_{+}^N, \quad w(t,x) := \int _{\R^{N}_+} K(t,x,y)w_0(y)dy,
\end{equation}
where the kernel $K$ is defined by
\begin{equation}\label{eq:Def-K}
K(t,x,y) := G(t,x'-y',x_N-y_N) - G(t,x'-y',x_N+y_N),
\end{equation}
for any $x=(x',x_N)\in \R ^{N}_+$, $y=(y',y_N)\in \R ^{N}_+$ and $t > 0$. We point out that, in this case, in order to ensure the validity of the comparison principle for \eqref{eq:Cauchy-Half-Space}, one must assume also that, for all $t >0$ and $x' \in \R^{N-1}$,
\begin{equation}\label{cond-G-positivite}
\text{ the function $x_{N} \mapsto G(t,x',x_{N})$ is nonincreasing on $(0,+\infty)$},
\end{equation}
so that for $t >0$ and $x,y \in \R_{+}^N$ one has $K(t,x,y) \geq 0$.

\begin{rem}[On the comparison principle assumption]\label{rem:nouvelle}
It is rather clear that condition \eqref{cond-G-positivite} is fulfilled in the case of the Laplacian and the fractional Laplacian.

On the other hand, let us consider a convolution operator as stated in Example \ref{ex:nonreg}, with the additional assumption \eqref{eq:Cond-J-cri}. Since convolution preserves the class of radially nonincreasing kernels ---see Lemma \ref{lem:convolution}---  it follows from \eqref{eq:Cond-J-cri} that $G$, given by \eqref{G-measure}, satisfies \eqref{cond-G-positivite}.\qed
\end{rem}

The following result seems to be more or less ``well-known'' but, since we did not find it in the literature, for the sake of completness we give its proof.

\begin{lemma}[Convolution of radially nonincreasing kernels]\label{lem:convolution} Convolution preserves the class of radially nonincreasing kernels.
\end{lemma}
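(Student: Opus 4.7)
The plan is to reduce the $N$-dimensional statement to a one-dimensional fact via Fubini's theorem, and then to establish the latter by a short reflection argument. I interpret ``radially nonincreasing'' in the sense required for the application just made, namely a nonnegative measurable kernel $f:\R^N\to[0,+\infty)$ such that for every $y'\in\R^{N-1}$ the slice $y_N\mapsto f(y',y_N)$ is even in $y_N$ and nonincreasing on $[0,+\infty)$.

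\textbf{Step 1: Reduction to one dimension.} By Fubini's theorem,
$$
(f*g)(x',x_N)=\int_{\R^{N-1}}\Bigl[\int_{\R} f(x'-y',x_N-y_N)\,g(y',y_N)\,dy_N\Bigr]dy',
$$
and for every fixed $x',y'\in\R^{N-1}$ the bracketed inner integral is the one-dimensional convolution $(F*G)(x_N)$ of the nonnegative functions $F(s):=f(x'-y',s)$ and $G(s):=g(y',s)$, both of which are even and nonincreasing on $[0,+\infty)$ by hypothesis. It therefore suffices to prove the same evenness and monotonicity for any such one-dimensional convolution; these properties are preserved by the nonnegative superposition (integration over $y'$) and hence transfer to $f*g$.

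\textbf{Step 2: The one-dimensional claim.} Evenness of $F*G$ follows at once from the change of variable $s\mapsto-s$ together with $F(-s)=F(s)$ and $G(-s)=G(s)$. For the monotonicity, given $0\le t_1<t_2$, I would write
$$
(F*G)(t_1)-(F*G)(t_2)=\int_{\R}\bigl[F(t_1-s)-F(t_2-s)\bigr]G(s)\,ds,
$$
split the integral at the midpoint $s_0:=(t_1+t_2)/2$, and in the part where $s>s_0$ perform the reflection $s\mapsto t_1+t_2-s$. Using the evenness of $F$, the difference then rewrites symmetrically as
$$
(F*G)(t_1)-(F*G)(t_2)=\int_{-\infty}^{s_0}\bigl[F(t_1-s)-F(t_2-s)\bigr]\bigl[G(s)-G(t_1+t_2-s)\bigr]ds.
$$
The elementary inequalities $|t_1-s|\le|t_2-s|$ and $|s|\le|t_1+t_2-s|$ for all $s\le s_0$, combined with the evenness and nonincreasing character of $F$ and $G$, ensure that both bracketed factors are nonnegative on the domain of integration. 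Hence the integrand is nonnegative and $(F*G)(t_1)\ge(F*G)(t_2)$, which is the desired monotonicity.

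\textbf{Main obstacle.} The argument is entirely elementary. The only point that demands some care is the bookkeeping in the reflection substitution and the verification of the two elementary inequalities above; once these are in hand, the nonnegativity of the integrand is immediate, and the $N$-dimensional conclusion follows by integrating in the transverse variable $y'$.
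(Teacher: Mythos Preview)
Your proof is correct and follows a genuinely different route from the paper's. The paper interprets ``radially nonincreasing'' literally (radially symmetric and nonincreasing in $|x|$), reduces to $N=1$ by evaluating at $(|x|,0,\dots,0)$, and then in one dimension first assumes $K\in C^1$ with $K'\in L^1$, computes $H'=K'*J$, and shows $H'(x)\leq 0$ for $x>0$ by a change of variable; the general case is then recovered by regularizing $K$ with the heat kernel. Your argument instead (i) adopts the weaker slice-wise interpretation---even in $x_N$ and nonincreasing on $[0,+\infty)$ for each fixed $x'$---which is precisely the hypothesis \eqref{eq:Cond-J-cri} and the conclusion \eqref{cond-G-positivite} actually used in Remark~\ref{rem:nouvelle}; (ii) reduces to $N=1$ via Fubini rather than via radial symmetry; and (iii) handles the one-dimensional case by a direct reflection identity that avoids both differentiation and approximation. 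Your route is more elementary and, notably, proves exactly the statement needed for the application (the kernels $J$ in Theorem~\ref{th:systematic-1} are only assumed even in $x'$ and in $x_N$, not radially symmetric), whereas the paper's version, as written, formally requires radial symmetry. The trade-off is that the paper's version yields the stronger conclusion of full radial monotonicity when the inputs are radially symmetric.
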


\begin{proof} Let $K,J \in L^1(\R^N)$ be nonnegative and  radially nonincreasing. It is easy to see that $H := K*J$ is radially symmetric. Since for instance $H(x) = H((|x|,0,...,0))$, in order to show that $H$ is radially nonincreasing it is enough to show the result when $N = 1$.

Now let  $K,J \in L^1({\R})$ be two even functions which are nonincreasing on $(0,+\infty)$. We first consider the case where $K\in C^1({\R})$ and that $K' \in L^1({\R})$. Then $H' = K' * J$ and, for $x > 0$, 
\begin{eqnarray*}
H'(x) & = & \int_{-\infty}^{+\infty} K'(x-y)J(y)dy\\
& = & \int_{-\infty}^{0} K'(y)J(x-y)dy + \int_0^{+\infty} K'(y)J(x-y)dy\\
&=& \int_0^{+\infty} K'(-y)J(x+y)dy + \int_0^{+\infty} K'(y) J(x-y)dy\\
&=& \int _0^{+\infty} K'(y) (J(|x-y|)-J(x+y))dy,
\end{eqnarray*}
where we have used the fact that $J(x-y) = J(|x-y|)$.
Clearly for $x,y > 0$ we have $J(|x-y|) - J(x+y) \geq 0$, since $J$ is nonincreasing on $(0,+\infty)$ and $|x-y| \leq x + y$, while $K'(y) \leq 0$. Therefore $H'(x) \leq 0$ for $x > 0$.

In the general case, we use a regularization argument. Consider the heat kernel $G_{t}(x) := (4\pi t)^{-1/2}\exp(-x^2/4t)$ for $t > 0$. Since $G_{t} \in \mathscr{S}(\R)$, is even and decreasing on $(0,+\infty)$, the function $K_{t} := G_{t}*K$ is even and nonincreasing on $(0,+\infty)$ according to the above result. As a consequence, $H_{t} := K_{t}*J = G_{t}*K*J$ is also even and nonincreasing on $(0,+\infty)$. Since $G_{t}*(K*J)$ converges to $K*J$ as $t \to 0^+$ in $L^1(\R)$,  we know that, for a subsequence $(t_{n})_{n \geq 1}$,  $G_{t_{n}}*(K*J) \to K*J$ a.e.\ on $\R$. This shows that $K*J$ is also even and noncreasing on $(0,+\infty)$.
\end{proof}

\subsection{The semilinear equations in $\R^{N}$ and $\R^{N}_+$}

We first need to say a word on the notion of solutions to
\begin{equation}\label{eq:Cauchy-Space-NL}
\begin{cases}
\partial_{t} u = Au + |u|^{\alpha}u &\text{ in } (0,T)\times \R^{N}\\
u(0,\cdot) = u_0 &\text{ in } \R^{N}.
\end{cases}
\end{equation}
We shall say that, for some $T > 0$, a function 
$$u\in C^1((0,T),L^1(\R ^N)) \cap C\left([0,T],L^1(\R^N)\cap L^{\alpha+1}(\R^N)\right) $$
 is  a weak solution to \eqref{eq:Cauchy-Space-NL} if for every $\phi \in C^1([0,T],\mathscr{S}(\R^N))$ we have 
$$\int_{0}^T\!\!\!\int_{\R^N} \!\!\!\left[
u(t,x)\left(\partial_{t}\phi + A\phi\right) 
+ |u|^{\alpha} u\phi\right]dxdt 
= \int_{\R^N}\!\!\! \left[u(T,x)\phi(T,x) - u_{0}(x)\phi(0,x)\right] dx.$$
For such solutions, the comparison principle is available. Also, for $u_0\in L^1(\R ^N) \cap L^\infty(\R^N)$, the associated Cauchy problem \eqref{eq:Cauchy-Space-NL} admits a unique solution defined on some maximal interval $[0,T)$. Moreover either $T = +\infty$ and the solution is global, or $T < +\infty$ and then $\|u(t,\cdot)\|_{L^\infty}$ tends to $+\infty$ as $t\to T$, which is called blow-up in finite time. These facts are rather well-known, and parts of them can be found in  \cite{Gar-Qui-10} for instance. 

As for the semilinear Cauchy problem  in the half-space \eqref{eq:1}, the strategy consists, as above, in constructing the odd (with respect to $x_N$) extension of the initial data $u_0$ so that we are back to \eqref{eq:Cauchy-Space-NL}. Details are omitted.

\section{Systematic blow-up when $0 < \alpha < \beta/(N + 1)$} \label{s:blowup}

In this section, we prove the blow-up of any solution when $0 < \alpha < p_F - 1$, as stated in Theorem \ref{th:systematic}. 

\medskip

If the solution does not blow up in finite time we would have a global solution $u(t,x) \geq 0$ of the Cauchy problem \eqref{eq:1}, with an initial data $u_{0}$ satisfying Assumption \ref{ass:initial}, and moreover since we assume that $u_{0} \in L^\infty(\R^N)$, for any $T > 0$ we have also $u \in L^\infty(0,T; L^\infty(\R^N))$. 

Upon considering ${\widetilde u}_{0}$, the odd extension of $u_{0}$ through the definition \eqref{eq:Odd-extension}, we can define, for any $t\geq 0$, the  function
\begin{equation}
\label{def:f}
f(t) := 
\int_{\R ^{N}}G(t,x){\widetilde u}_0\left(x+\gamma t^{1/\beta}{\bf e}_N\right)dx 
= v(t,\gamma t^{1/\beta}{\bf e}_{N}),
\end{equation}
where $v$ is the solution of the linear equation
\begin{equation}\label{eq:Cauchy-Half-Space-2}
\begin{cases}
\partial_{t} v = Av &\text{ in } (0,+\infty)\times \R ^{N}_+\\
v(0,\cdot) = u_{0} &\text{ in } \R ^{N}_+\\
v(t,x',0) = 0 &\text{ on } \partial \R ^{N}_+,
\end{cases}
\end{equation}
and where we recall, see \eqref{parite-kernel}, that the kernel $G(t)$ is even in the variable $x$. Notice that in \cite{Alf-fujita} by the first author it was enough to define the above quantity with $\gamma = 0$, but here this would yield $f(t)=0$: that is why we need to follow the value of $v$ at an appropriate moving point, namely at $\gamma t^{1/\beta}{\bf e}_N$. 

From \eqref{convolution} and \eqref{parite-kernel}, we remark that, thanks to the  comparison principle, we have $u\geq v$ in $\R ^{N}_+$. In particular, it follows from  Lemma \ref{lem:from-below} that the parameter $\gamma$ being chosen as in Lemma \ref{lem:from-below}, for some $t_{0} > 0$ and a constant $C_{*}(\gamma)$ we have
\begin{equation}
\label{vers-absurdite}
\forall t \geq t_{0}, \qquad f(t) = v\left(t,\gamma t^{1/\beta}{\bf e}_N\right) \geq \frac{C_{*}(\gamma)m_1(u_0)}{t^{(N+1)/\beta}}.
\end{equation}
Now, assuming that the solution $u$ of the nonlinear equation is global, using the nonlinear term, we are going to estimate the quantity $f(t)$ from above.

\begin{lemma}[Estimate from above] \label{lem:f-above}
Assume that the solution $u \not\equiv 0$ of \eqref{eq:1} is nonnegative and exists for all times $T > 0$. Then, for any $\gamma > 0$, the function $f$ being defined by \eqref{def:f}, there exists a constant $C^*(\alpha) > 0$,  such that for all $t > 0$ we have
\begin{equation}
\label{f-above}
f(t)\leq \frac{C^*(\alpha)}{(1+t)^{1/\alpha}}, \quad \text{ for any } t > 0.
\end{equation}
\end{lemma}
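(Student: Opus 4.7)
The plan is to insert the linear semigroup into the definition of $f$ so as to produce a differential inequality driven by the nonlinearity. For each fixed $T > 0$, I would introduce
$$F(t) := \int_{\R^N} G(T-t, x - \gamma T^{1/\beta} {\bf e}_N) \, \widetilde u(t, x) \, dx, \qquad 0 \leq t \leq T,$$
understood as the pairing between the bounded measure $G(T-t,\cdot)$ and the bounded function $\widetilde u(t,\cdot)$. Using the evenness of $G$ (Assumption \ref{ass:kernel-G}) together with \eqref{def:f}, one checks at once that $F(0) = v(T, \gamma T^{1/\beta} {\bf e}_N) = f(T)$, while $F(T) = \widetilde u(T, \gamma T^{1/\beta} {\bf e}_N) = u(T, \gamma T^{1/\beta} {\bf e}_N) \geq 0$ thanks to the global existence and nonnegativity of $u$.

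\textbf{Differential identity.} Since $\widetilde u_0$ is odd in $x_N$, and since both the linear flow (by evenness of $G$ in $x_N$) and the nonlinearity $z \mapsto |z|^\alpha z$ commute with odd reflection in $x_N$, the odd extension $\widetilde u$ solves $\partial_t \widetilde u = A \widetilde u + |\widetilde u|^\alpha \widetilde u$ on the whole of $\R^N$. Differentiating $F$ in $t$, using $\partial_s G(s) = A G(s)$, and invoking the self-adjointness of $A$ (consequence of Assumption \ref{ass:diffusion}), the two linear contributions cancel and one obtains
$$F'(t) = \int_{\R^N} G(T-t, x - \gamma T^{1/\beta} {\bf e}_N) \, |\widetilde u|^\alpha \widetilde u(t, x) \, dx.$$
Because $|\widetilde u|^\alpha \widetilde u$ is odd in $x_N$ and $G$ is even in $x'$ and $x_N$, splitting the integral at $\{x_N = 0\}$ and reflecting the half $\{x_N < 0\}$ rewrites both $F$ and $F'$ as integrals on $\R^N_+$ against the nonnegative half-space kernel of \eqref{eq:Def-K}:
$$F'(t) = \int_{\R^N_+} K\bigl(T-t, \gamma T^{1/\beta} {\bf e}_N, x\bigr) \, u(t, x)^{1+\alpha} \, dx, \quad F(t) = \int_{\R^N_+} K\bigl(T-t, \gamma T^{1/\beta} {\bf e}_N, x\bigr) \, u(t, x) \, dx.$$

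\textbf{Jensen's inequality and ODE integration.} Set $M(t) := \int_{\R^N_+} K(T-t, \gamma T^{1/\beta} {\bf e}_N, x) \, dx$. A direct substitution and the evenness of $G$ in the $y_N$-variable show that $M(t) \leq \int_{\R^N} G(T-t, dy) = 1$. Applying Jensen's inequality to the convex function $z \mapsto z^{1+\alpha}$ against the probability measure $K(T-t, \gamma T^{1/\beta} {\bf e}_N, \cdot)/M(t)$ then yields
$$F'(t) \;\geq\; M(t)^{-\alpha} F(t)^{1+\alpha} \;\geq\; F(t)^{1+\alpha}.$$
If $F(0) = 0$ the bound for $f(T)$ is trivial; otherwise, since $F' \geq 0$, the function $F$ remains positive on $[0,T]$ and integrating $F'/F^{1+\alpha} \geq 1$ on $[0, T]$ and dropping the nonnegative term $\frac{1}{\alpha F(T)^\alpha}$ produces $F(0)^{-\alpha} \geq \alpha T$, that is, $f(T) = F(0) \leq (\alpha T)^{-1/\alpha}$. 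Since $T > 0$ was arbitrary, combining $f(t) \leq (\alpha t)^{-1/\alpha}$ for $t > 0$ with the trivial bound $f(t) \leq \|u_0\|_{L^\infty}$ for small $t$ (a consequence of Lemma \ref{lem:decrease}) gives the announced estimate \eqref{f-above}.

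\textbf{Main difficulty.} The only delicate point will be the rigorous justification of the derivative computation when $G(T-t, \cdot)$ is merely a bounded measure, as in the convolution example \eqref{G-measure}: one cannot pointwise apply $A$ to the measure $G(T-t, \cdot - \gamma T^{1/\beta} {\bf e}_N)$. The cleanest workaround will be to bypass the pointwise differentiation of $F$ and to establish the integrated identity
$$F(T) - F(0) = \int_0^T \!\! \int_{\R^N_+} K\bigl(T-t, \gamma T^{1/\beta} {\bf e}_N, x\bigr) \, u(t, x)^{1+\alpha} \, dx \, dt$$
directly from the Duhamel representation of $\widetilde u$ on $\R^N$ discussed in Section \ref{s:cauchy}.
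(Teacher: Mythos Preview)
Your proposal is correct and follows essentially the same route as the paper: the paper fixes $t>0$, sets $g(s):=\int_{\R^N}G(t-s,x)\,\widetilde u(s,x+\gamma t^{1/\beta}{\bf e}_N)\,dx$ for $0<s<t$, obtains $g'(s)\geq g(s)^{1+\alpha}$ via self-adjointness of $A$ and Jensen's inequality (using $\int_{\R^N_+}K(t-s,y,\gamma t^{1/\beta}{\bf e}_N)\,dy\in(0,1)$), and integrates on $[0,t-\varepsilon]$ before letting $\varepsilon\to 0$. The only cosmetic differences are your change of variable in the integrand and your handling of the endpoint $t=T$ (the paper's $\varepsilon$-truncation sidesteps the pointwise evaluation $F(T)=\widetilde u(T,\gamma T^{1/\beta}{\bf e}_N)$ that you flag in your ``Main difficulty'' paragraph).
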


\begin{proof} Let $T>0$ be given. Fix some $0 < t\leq T$, and for $0 < s < t$ define the function
\begin{eqnarray}
g(s) & := & \int _{\R^{N}}G(t-s,x){\widetilde u}(s,x+\gamma t^{1/\beta}{\bf e}_N)dx\nonumber\\
& = & \int _{\R^{N}_+}K(t-s,y,\gamma t^{1/\beta}{\bf e}_{N})u(s,y)dy,\nonumber
\end{eqnarray}
where the kernel $K$ is defined in \eqref{eq:Def-K}. Observe that if $v$ is defined in \eqref{eq:Cauchy-Half-Space-2}, we have 
$$g(0) = v\left(t,\gamma t^{1/\beta}{\bf e}_N\right) = f(t).$$
In view of equation \eqref{eq:1}, we know that ${\widetilde u}$, the odd extension of $u$ defined through \eqref{eq:Odd-extension}, satisfies the equation
$$\partial_{t}{\widetilde u} = A{\widetilde u} + |{\widetilde u}|^\alpha {\widetilde u}\quad \text{ in }\, (0,+\infty)\times\R^N,$$
and thus for $0 < s < t$ we have
\begin{eqnarray}
g'(s) &=& \int _{\R^{N}} \left[
-AG(t-s,x){\widetilde u}(s,x+\gamma t^{1/\beta}{\bf e}_N) 
+ G(t-s,x)A{\widetilde u}(s,x+\gamma t^{1/\beta}{\bf e}_N)\right.
\nonumber\\
&&\quad\quad +\, \left. G(t-s,x)| {\widetilde u}| ^{\alpha}{\widetilde u}(s,x+\gamma t^{1/\beta}{\bf e}_N)\right]dx.\nonumber
\end{eqnarray}
Since the operator $A$ is self-adjoint, this reduces to
\begin{eqnarray}
g'(s) &=& 
\int _{\R^{N}}G(t-s,x)| {\widetilde u}|^{\alpha}{\widetilde u}(s,x+\gamma t^{1/\beta}{\bf e}_N)dx\nonumber\\
&=&
\int _{\R^{N}_+} K(t-s,y,\gamma t^{1/\beta}{\bf e}_N)u^{1+\alpha}(s,y)dy
\label{derivee-de-g}
\end{eqnarray}
using the fact that $u \geq 0$ on $\R^{N}_+$. We see that, for $t>0$,
$$
\int _{\R^{N}_+}K(t-s,y,\gamma t^{1/\beta}{\bf e}_N)dy = 1 - 2\int _{z_N\geq \gamma t^{1/\beta}}G(t,z)dz\in(0,1),
$$
so that the Jensen inequality yields
$$
g'(s) \geq  
\left(\int_{\R^{N}_+} K(t-s,y,\gamma t^{1/\beta}{\bf e}_N)dy\right)^{-\alpha}\, 
g^{1+\alpha}(s)\geq  g^{1+\alpha}(s).
$$
Integrating from $s=0$ to $s=t-\ep$ we get
$$
\frac{1}{f^\alpha(t)} = 
\frac{1}{g^{\alpha}(0)} \geq 
\frac{1}{g^\alpha(0)} - \frac{1}{g^{\alpha}(t-\ep)} \geq 
\alpha(t - \ep).
$$
Letting $\ep \to 0$ concludes the proof of Lemma \ref{lem:f-above}. 
\end{proof}

We are now in the position to complete the proof of Theorem \ref{th:systematic}.

\begin{proof}[Proof of Theorem \ref{th:systematic}]  

Recall that we are assuming that a global solution $u(t,x) \geq 0$ in $(0,+\infty)\times\R^N_{+}$ exists and that $u\not\equiv 0$, so that Lemma \ref{lem:f-above} holds for any $\gamma > 0$. Upon choosing $\gamma > 0$ as in Lemma \ref{lem:from-below}, from the estimates \eqref{vers-absurdite} and \eqref{f-above} we deduce that for $t \geq t_{0}$ large enough we have 
\begin{equation}\label{eq:bornes-1}
C_{*}(\gamma) m_{1}(u_{0}) t^{-(N+1)/\beta} \leq C^*(\alpha) (1+t)^{-1/\alpha},
\end{equation}
where $C_{*}(\gamma)$ and $C^*(\alpha)$ are two constants independent of $u_{0}$.
Letting $t \to +\infty$, the above inequality implies that $1/\alpha \leq (N+1)/\beta$. Therefore when 
$$0 < \alpha < p_F - 1 = \frac{\beta}{N+1}$$ 
we cannot have a nonnegative global solution $u$ of \eqref{eq:1}.
\end{proof}

The proof of the systematic blow-up of nonnegative solutions when $\alpha = \beta/(N+1)$ is more delicate, and the next section is devoted to its proof.

\section{Systematic blow-up when $\alpha = \beta/(N+1)$}\label{s:critical}

We now consider the critical case $\alpha = p_F - 1 = \beta/(N+1)$, for which, assuming that the solution $u$ is nonnegative and global, the inequality \eqref{eq:bornes-1} only provides that the initial data of any nonnegative global solution $u$ must satisfy
\begin{equation}\label{eq:cond-u0-cri}
m_1(u_0)\leq \frac{C^*(\alpha)}{C_{*}(\gamma)} =: C(\alpha,\gamma),
\end{equation}
where the constant $C(\alpha,\gamma) > 0$ depends only on the dimension $N$, on $\alpha,\gamma$ and on the operator $A$, but not on the initial data. Thus, by considering $u(\tau,\cdot)$ as an initial value for the evolution equation, we deduce that $u(t+\tau,\cdot)$ satisfies the inequalities \eqref{vers-absurdite} and \eqref{f-above}, so that $m_{1}(u(\tau,\cdot)) \leq C(\alpha,\gamma)$. Hence, the following holds, where we recall that $\widetilde u(t,\cdot)$ denotes the odd extension of $u(t,\cdot)$.

\begin{lemma}[A bound for the \lq\lq first moment'']\label{lem:M-1-bounded}
Assume that $\alpha = \beta/(N+1)$ and that the solution $u$ of \eqref{eq:1} is nonnegative and exists for all $T > 0$. Then denoting by $M_{1}(t)$
$$
M_1(t) := \int _{\R^{N}} x_N {\widetilde u}(t,x)dx,
$$
we have, for all $t\geq 0$,
\begin{equation}
\label{norme-L1}
0\leq M_1(t) \leq 2C(\alpha,\gamma).
\end{equation}
\end{lemma}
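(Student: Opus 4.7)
The nonnegativity $M_{1}(t) \geq 0$ is immediate, since $\widetilde u(t,\cdot)$ is the odd extension of $u(t,\cdot) \geq 0$, so $x_{N}\widetilde u(t,x) \geq 0$ everywhere on $\R^N$ and
$$M_{1}(t) = \int_{\R^N} x_{N}\widetilde u(t,x)\,dx = 2\int_{\R^N_{+}} x_{N} u(t,x)\,dx = 2m_{1}(u(t,\cdot)).$$
The entire content of the lemma is therefore the upper bound, and the plan is to apply the chain of estimates of Section \ref{s:blowup} not to $u$ itself but to the time-shifted solution $v(t,x) := u(t+\tau,x)$, for each $\tau \geq 0$. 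By global existence and nonnegativity, $v$ is again a nonnegative global solution of \eqref{eq:1}, but with initial data $v_{0} := u(\tau,\cdot)$.

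Before Theorem \ref{th:systematic} can be quoted for $v$, one must verify that $v_{0}$ itself satisfies Assumption \ref{ass:initial}; the only nontrivial point is the finiteness of $m_{1}(v_{0})$ (together with $v_{0} \in L^1\cap L^\infty$). Starting from the mild formula for the odd extension,
$$\widetilde u(\tau,\cdot) = G(\tau)*\widetilde u_{0} + \int_{0}^{\tau} G(\tau-s)*\bigl(|\widetilde u|^{\alpha}\widetilde u\bigr)(s,\cdot)\,ds,$$
and using that $G(t)$ is a probability measure which is even in $x$, so that $\int_{\R^N}x_{N}(G(t)*h)(x)dx = \int_{\R^N} x_{N}h(x)dx$ for every odd $h$ with finite first moment (cf. also Lemma \ref{lem:xN-Au}), one obtains, upon restricting back to $\R^N_{+}$,
$$m_{1}(u(\tau,\cdot)) = m_{1}(u_{0}) + \int_{0}^{\tau}\!\!\int_{\R^N_{+}} x_{N} u^{\alpha+1}(s,x)\,dx\,ds.$$
Controlling $\int_{\R^N_{+}} x_{N} u^{\alpha+1}\,dx \leq \|u(s,\cdot)\|_{L^\infty}^{\alpha}\, m_{1}(u(s,\cdot))$ and invoking that $u \in L^\infty_{\mathrm{loc}}([0,+\infty);L^\infty)$, Gronwall's lemma yields $m_{1}(u(\tau,\cdot)) < +\infty$ for every $\tau \geq 0$, and a parallel computation gives $u(\tau,\cdot) \in L^{1}(\R^{N}_{+})$. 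Hence Assumption \ref{ass:initial} is met by $v_{0}$.

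Applying to $v$ verbatim the lower bound \eqref{vers-absurdite} together with the upper bound of Lemma \ref{lem:f-above}, we obtain, for $\gamma > 0$ chosen as in Lemma \ref{lem:from-below} and all $t$ large enough,
$$C_{*}(\gamma)\, m_{1}(v_{0})\, t^{-(N+1)/\beta} \leq C^{*}(\alpha)\,(1+t)^{-1/\alpha}.$$
At the critical exponent $\alpha = \beta/(N+1)$, the two powers of $t$ on each side coincide exactly, so passing to the limit $t\to+\infty$ gives
$$m_{1}(u(\tau,\cdot)) = m_{1}(v_{0}) \leq \frac{C^{*}(\alpha)}{C_{*}(\gamma)} =: C(\alpha,\gamma),$$
and the claimed inequality $M_{1}(\tau) = 2 m_{1}(u(\tau,\cdot)) \leq 2C(\alpha,\gamma)$ follows for every $\tau \geq 0$.

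The genuinely delicate point in this scheme is the propagation of the weighted $L^1$ bound, namely the a priori finiteness of $m_{1}(u(\tau,\cdot))$ needed to place $v_{0}$ within the hypotheses of Theorem \ref{th:systematic}. For nonlocal $A$ the purely formal manipulation of the moment identity is not legitimate, and one has to justify it by the truncation device $\zeta_{R}$ of Lemma \ref{lem:A-zeta-R}: testing against $x_{N}\zeta_{R}(x)$, using \eqref{eq:xN-A-zeta-1}--\eqref{eq:xN-A-zeta-2} together with self-adjointness to discard the linear contribution, and then sending $R \to +\infty$ by monotone/dominated convergence. Once this technical step is established, the rest of the argument is nothing but a time translation of Theorem \ref{th:systematic}.
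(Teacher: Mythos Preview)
Your proposal is correct and follows essentially the same approach as the paper: use the bound \eqref{eq:bornes-1} at the critical exponent to get $m_1(u_0)\leq C(\alpha,\gamma)$, then time-translate and apply the same reasoning to $v_0=u(\tau,\cdot)$. The paper's argument is in fact just the paragraph preceding the lemma, and it takes for granted that $u(\tau,\cdot)$ satisfies Assumption~\ref{ass:initial}; your added justification of the a~priori finiteness of $m_1(u(\tau,\cdot))$ via the mild formulation and Gronwall (with the truncation device of Lemma~\ref{lem:A-zeta-R} as the rigorous substitute when the formal moment identity is not directly legitimate) is a welcome elaboration but not a different route.
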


Now, we know that ${\widetilde u}$ satisfies
\begin{equation}\label{eq:1-RN}
\begin{cases}
\partial_t {\widetilde u} = A{\widetilde u} +  |{\widetilde u}|^{\alpha}{\widetilde u} \quad &\mbox{in }\, (0,T)\times \R^{N}, \\
{\widetilde u}(0,x) = {\widetilde u}_{0}(x)\quad & \mbox{for }\, x \in \R^{N}.
\end{cases}
\end{equation}
Assuming that $A$ satisfies the conditions of Theorem \ref{th:systematic-1}, multiplying \eqref{eq:1-RN} by $x_N$, integrating  over $x\in \R^N$, and using Lemma \ref{lem:xN-Au}, we get
$$
\frac{d}{dt}M_1(t) = \int_{\R ^N} x_N  |{\widetilde u}|^{\alpha}{\widetilde u}(t,x)\,dx,
$$
so that
$$
\int_0^{T}\int_{\R ^N} x_N |{\widetilde u}|^{\alpha}{\widetilde u}(t,x)\,dxdt = M_1(T) - M_1(0)\leq 2C(\alpha,\gamma),
$$
for all $T\geq 0$. As a result, we infer that (since $x_N {\widetilde u}(t,x)\geq 0$)
\begin{equation}
\label{int-double}
\int _0 ^{\infty} \int _{\R  ^N} x_N |{\widetilde u}|^{\alpha}{\widetilde u}(t,x)\,dxdt< +\infty.
\end{equation}

Next we show the following estimate.

\begin{proposition}[Integral estimate]\label{lem:Estim-T-epsilon} Assume that $A$ satisfies the conditions of Theorem \ref{th:systematic-1}. Let $\alpha = \beta/(N+1)$ and let $u$ be a nonnegative global solution to the nonlinear equation \eqref{eq:1}. Then, for any $T > 0$, there exists a constant $C := C(T) > 0$ such that for any given $\epsilon > 0$ we have
\begin{equation}\label{eq:Estim-T-epsilon}
\int_T^{\infty}\!\!\!\! \int _{\R^{N}_{+}} x_N u^{1+\alpha}(t,x)dxdt \leq C\ep^{\frac{\beta \alpha}{\alpha + 1}} 
\left(\int_{T}^{\infty}\!\!\!\!\int_{\R^{N}_{+}} x_N u^{1+\alpha}(t,x)\right)^{1/(\alpha+1)}. 
\end{equation}
\end{proposition}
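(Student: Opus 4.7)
My strategy is to combine the Duhamel representation with a Hölder-type interpolation involving the free parameter $\epsilon$ in order to derive the claimed inequality. Once \eqref{eq:Estim-T-epsilon} is established, letting $\epsilon\to 0^+$ will force
$$I(T) := \int_T^\infty \int_{\R^N_+} x_N\, u^{1+\alpha}(t,x)\,dx\,dt = 0,$$
which will contradict the pointwise positivity $u(t,\gamma t^{1/\beta}{\bf e}_N) > 0$ provided by Lemma \ref{lem:from-below}, thereby closing the proof of Theorem \ref{th:systematic-1}.

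First I would apply Duhamel's representation to write, for $t \geq T$,
$$u(t,x) = v_T(t,x) + w(t,x),$$
where $v_T(t,x) := \int_{\R^N_+} K(t-T,x,y)\,u(T,y)\,dy$ is the linear Dirichlet semigroup evolved from $u(T,\cdot)$, and $w(t,x) := \int_T^t\int_{\R^N_+} K(t-s,x,y)\,u^{1+\alpha}(s,y)\,dy\,ds$ is the nonlinear Duhamel integral; both are nonnegative by the comparison principle. Lemma \ref{lem:M-1-bounded} gives the uniform bound $m_1(u(T)) = M_1(T)/2 \leq C(\alpha,\gamma)$, so Lemma \ref{lem:decrease} applied to the odd extension of $u(T,\cdot)$ yields
$$\|v_T(t,\cdot)\|_{L^\infty} \leq C_T\,(1+t-T)^{-1/\alpha},$$
where $C_T$ depends on $m_1(u(T))$ and $\|\FF(\widetilde{u(T,\cdot)})\|_{L^1}$. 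A short computation using the formula \eqref{eq:Def-K} for $K$ and the $x_N$-evenness of $G$ gives the conservation identity $\int_{\R^N_+} x_N K(\tau,x,y)\,dx = y_N$ for $y_N > 0$.

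Next I would bound $\int_{\R^N_+} x_N\, u^{1+\alpha}$ by splitting $u^{1+\alpha} \leq 2^\alpha(v_T^{1+\alpha} + w^{1+\alpha})$ and controlling each term separately. For the linear piece, Hölder gives $\int x_N v_T^{1+\alpha} \leq \|v_T\|_\infty^\alpha\int x_N v_T = \|v_T\|_\infty^\alpha\, m_1(u(T))$ thanks to the conservation identity just recalled. For the nonlinear piece $\int x_N w^{1+\alpha}$ I would use Minkowski's integral inequality combined with the same kernel identity to reduce it to a quantity controlled by $I(T)^{1/(\alpha+1)}$ weighted by integrals of the linear semigroup. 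The parameter $\epsilon$ should then appear naturally as a free time scale, most plausibly by splitting the time integration at $T+\epsilon$ and applying the crude nonlinear control on $[T,T+\epsilon]$ against the linear decay on $[T+\epsilon,\infty)$. The resulting balance then yields \eqref{eq:Estim-T-epsilon}, where the specific exponent $\beta\alpha/(\alpha+1)$ is dictated by the Hölder pair $\bigl(1+\alpha,\,(1+\alpha)/\alpha\bigr)$ combined with the linear decay rate $1/\alpha = (N+1)/\beta$.

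The main obstacle will be the criticality $\alpha = \beta/(N+1)$: at this borderline exponent the linear $L^\infty$-decay and the nonlinear scaling conspire so that $\|v_T\|_\infty^\alpha \sim (t-T)^{-1}$, which is only \emph{logarithmically} non-integrable in time. Consequently the straightforward argument of Theorem \ref{th:systematic} --- which ultimately compared two differently-decaying powers of $t$ --- fails at the critical exponent, and one must exploit the additional smallness of the tail $I(T)$, which is finite by \eqref{int-double} and tends to $0$ as $T \to +\infty$. Calibrating the linear and nonlinear contributions precisely at this critical scaling, so that the free parameter $\epsilon$ can be sent to $0$ while keeping $C(T)$ bounded, is the delicate technical heart of the proof.
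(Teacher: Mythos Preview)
Your proposal does not prove the proposition, and the approach is fundamentally different from the paper's.

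The paper does not use Duhamel at all. It proves \eqref{eq:Estim-T-epsilon} by a \emph{rescaled test-function argument}: one multiplies the equation for $\widetilde u$ by $x_N\zeta_R(x)\psi_R(t)$, where $\psi_R(t) = \rho((t-T)/R^\beta)$ is a time cutoff and $\zeta_R(x) = \rho(\epsilon|x|/R)$ is a spatial cutoff carrying the free parameter $\epsilon$. After integrating by parts in $t$ and using the self-adjointness of $A$, the boundary terms $I_1,I_2$ are estimated via H\"older's inequality together with the key Lemma~\ref{lem:A-zeta-R}, which gives $|A(x_N\zeta_R)|\leq C|x_N|(\epsilon/R)^\beta$. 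The critical relation $\beta=(N+1)\alpha$ makes all powers of $R$ cancel exactly, and letting $R\to+\infty$ yields \eqref{eq:Estim-T-epsilon}. Thus $\epsilon$ is a \emph{spatial} dilation parameter, not a time scale, and the exponent $\beta\alpha/(\alpha+1)$ arises from combining the $(\epsilon/R)^\beta$ estimate of Lemma~\ref{lem:A-zeta-R} with the H\"older volume factor $(R/\epsilon)^{(N+1)\alpha/(\alpha+1)}$.

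Your Duhamel outline has genuine gaps. First, applying Lemma~\ref{lem:decrease} to $u(T,\cdot)$ requires $\mathscr{F}(\widetilde{u(T,\cdot)})\in L^1$, which is not known to propagate from the assumption on $u_0$; even if it did, the constant $C_T$ would then depend on $\|\mathscr{F}(\widetilde{u(T,\cdot)})\|_{L^1}$ in an uncontrolled way. Second, and more seriously, the step ``use Minkowski's integral inequality combined with the same kernel identity to reduce $\int x_N w^{1+\alpha}$ to a quantity controlled by $I(T)^{1/(\alpha+1)}$'' is not carried out and is not clearly feasible: Minkowski in the weighted space $L^{1+\alpha}(x_N\,dx)$ would require operator bounds on $K(t-s,\cdot,\cdot)$ in that space, which you have not established and which are not provided anywhere in the paper. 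Third, your suggestion that $\epsilon$ ``should appear naturally as a free time scale, most plausibly by splitting the time integration at $T+\epsilon$'' is speculation, not an argument; you have not shown how any such splitting produces the precise power $\epsilon^{\beta\alpha/(\alpha+1)}$ multiplying $I(T)^{1/(\alpha+1)}$. At the critical exponent these powers must match exactly, and your outline gives no mechanism for this.
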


\begin{proof}
For given $\epsilon, T > 0$, we take a smooth function $\rho \in C_c^\infty(\R)$ such that 
$$\mathbf 1_{[-1,+1]} \leq \rho \leq \mathbf 1_{[-2,+2]},\qquad \rho(-s) = \rho(s),$$
and then, for $R>0$, we define the truncation functions $\psi_{R}$ and $\zeta_{R}$  by
$$
\psi_R(t) := \rho\left(\frac{t-T}{R^{\beta}}\right) \qquad\text{and}\qquad 
\zeta _R(x) := \rho\left(\ep \frac{| x|}{R}\right).
$$
For the sake of clarity, we divide the remainder of our proof into three steps. In the sequel, we denote by $C$ a positive constant that may change from place to place but that is always independent on  $\ep > 0$ and $R > 0$.

\medskip

\noindent{\bf Step 1.} We multiply equation \eqref{eq:1-RN} by $x_N\zeta _R(x)\psi _R(t)$ and integrate on $(T,+\infty)\times\R^N$, to obtain
\begin{eqnarray}
\int_T^{\infty}\!\!\!\! \int_{\R^{N}} x_N |{\widetilde u}|^{\alpha}{\widetilde u}(t,x)
\zeta_R(x)\psi_R(t) dxdt & = & 
-\int_T^{\infty}\!\!\!\!\int_{\R^{N}}A{\widetilde u}(t,x)x_N\zeta _R(x)\psi_R(t) dxdt \nonumber\\
&& + \int _T^{\infty}\!\!\!\!\int_{\R^{N}}  \partial_t {\widetilde u}(t,x) x_N \zeta _R(x)\psi_R(t) dxdt .\label{equality-1}
\end{eqnarray}
Using the fact that $A$ is self-adjoint we obtain,
$$
\int_T^{\infty}\!\!\!\!\int_{\R^{N}}\!\!\!\! A{\widetilde u}(t,x)x_N\zeta _R(x)\psi_R(t) dxdt =
\int_T^{\infty}\!\!\!\!\int_{\R^{N}}\!\!\!\! (A(x_N\zeta_R))(x){\widetilde u}(t,x)\psi_R(t) dxdt,$$
and for later use we set
\begin{equation}\label{eq:Def-I-1}
I_{1} = I_{1}(R) :=
\int_T^{\infty}\!\!\!\!\int_{\R^{N}}\!\!\!\! (A(x_N\zeta_R))(x){\widetilde u}(t,x)\psi_R(t) dxdt. 
\end{equation}
Also, thanks to the fact that $\psi_{R}$ has a compact support, upon integrating by parts in the time variable, and using the fact that $x_{N}{\widetilde u}(T,x) \geq 0$ on $\R^N$, we have
$$\int _T^{\infty}\!\!\!\!\int_{\R^{N}}  \partial_t {\widetilde u}(t,x) x_N \zeta _R(x)\psi_R(t) dxdt \leq - I_{2} ,$$
where for convenience we have set
$$I_{2} = I_{2}(R) := \int_T^{\infty}\!\!\!\!\int_{\R^{N}} {\widetilde u}(t,x) x_N \zeta _R(x)\psi_R'(t)dxdt.$$
Reporting these into \eqref{equality-1} we obtain
\begin{equation}\label{ineg}
\int_T^{\infty}\!\!\!\! \int_{\R^{N}} x_N |{\widetilde u}|^{\alpha}{\widetilde u}(t,x)
\zeta_R(x)\psi_R(t) dxdt \leq 
-I_{1} - I_{2}.
\end{equation}
Our aim is to prove that as $R \to +\infty$ the above inequality yields \eqref{eq:Estim-T-epsilon}. 

\medskip

\noindent{\bf Step 2.} We begin by proving that as $R \to +\infty$ we have $I_{2}(R) \to 0$. Indeed, observe that since
$$
| \psi_R'(t)| =\left| \frac{1}{R^{\beta}}\rho '\left(\frac{t-T}{R^{\beta}}\right)\right | \leq \frac{C}{R^{\beta}}\mathbf{1}_{(T+R^{\beta},T+2R^{\beta})}(t),
$$
using H\"older's inequality we may write (omitting to write the differential element $dxdt$ and using the shortcut $\widetilde u^{1+\alpha}$ for $|\widetilde u|^\alpha \widetilde u$)
\begin{eqnarray}
|I_2| & \leq & 
\frac{C}{R^{\beta}}\int _{T+R^{\beta}}^{T+2R^{\beta}}\!\!\!\! \int_{|x| \leq 2R/\ep } 
| x_N {\widetilde u}(t,x)| \label{footstep}\\
&\leq &  
\!\!\!\! \frac{C}{R^{\beta}}\left(\int _{T+R^{\beta}}^{T+2R^{\beta}}\!\!\!\!
\int_{|x| \leq 2R/\ep }\!\!\!\! |x_N|\right)^{\frac{\alpha}{\alpha+1}}
\!\!\! \left(\int _{T+R^{\beta}}^{T+2R^{\beta}} 
\!\!\!\!\int_{|x| \leq 2R/\ep }\!\!\!\! |x_N {\widetilde u}^{1+\alpha}(t,x)| \right)^{\frac{1}{\alpha+1}}\nonumber\\
&\leq & 
\frac{C}{R^{\beta}} 
\left( R^{\beta} \left(\frac{2R}{\ep} \right)^{N+1}\right)^{\frac{\alpha}{\alpha+1}}\left(\int_{T+R^{\beta}}^{T+2R^{\beta}}
\!\!\!\!\int_{| x| \leq 2R/\ep }\!\!\!\! |x_N {\widetilde u}^{1+\alpha}(t,x)|\right)^{\frac{1}{\alpha+1}} \nonumber\\
&\leq &
\frac C{\ep^{\frac{(N+1)\alpha}{\alpha+1}}} \left(\int _{T+R^{\beta}}^{T+2R^{\beta}}
\!\!\!\!\int_{|x| \leq 2R/\ep } \!\!\!\! | x_N {\widetilde u}^{1+\alpha }(t,x)|\right)^{\frac{1}{\alpha+1}},\nonumber
\end{eqnarray}
where, in the last step, we have used the fact that $\beta = (N+1)\alpha$. In view of \eqref{int-double}, the last integral above tends to zero as $R\to +\infty$, and so does $I_2$. 

\medskip

\noindent{\bf Step 3.} We now deal with the term $I_1 = I_1(R)$ defined in \eqref{eq:Def-I-1}. Using \eqref{eq:xN-A-zeta-1} of Lemma \ref{lem:A-zeta-R}, upon replacing $R$ by $R/\epsilon$, we have
\begin{equation}\label{hyp-locale-1}
x_N (A(x_N\zeta_R))(x)\geq 0, \quad \text{ for all $x$ such that $|x| \geq \frac{2R}{\ep}$.} 
\end{equation}
Since  ${\widetilde u}(t,x)$  has the same sign as $x_N$, we also have $(A(x_N\zeta_R))(x)\widetilde u(t,x) \geq 0$ for $|x| \geq 2R/\epsilon$, and therefore
\begin{equation}\label{I3}
I_1\geq I_3 := \int_T^{T+2R^{\beta}}\!\!\!\!
\int_{|x| <2R/\ep}\!\!\!\! (A(x_N\zeta_R))(x){\widetilde u}(t,x)\psi_R(t)dxdt.
\end{equation}
Now using \eqref{eq:xN-A-zeta-2}, and replacing $R$ by $R/\epsilon$, we have
\begin{equation}\label{hyp-locale-2}
|(A(x_N\zeta_R))(x)| \leq C | x_N| \frac{\ep^{\beta}}{R^{\beta}},\quad \text{ for all $x$ such that $| x| \leq \frac {2R}\ep $,} 
\end{equation}
so that we may estimate $I_{3}$ from below, that is
\begin{eqnarray*}
I_{1} \geq I_{3} &\geq &
-\int_T^{T+2R^{\beta}}\!\!\!\!
\int_{|x| <2R/\ep}\!\!\!\! \left|(A(x_N\zeta_R))(x)\right| \cdot|{\widetilde u}(t,x)|dxdt \\
& &\quad \geq 
- \; C \frac{\ep^{\beta}}{R^{\beta}}\int _T^{T+2R^{\beta}}\!\!\!\!
\int_{| x|  <2R/\ep} \!\!\!\! |x_N {\widetilde u}(t,x)|dxdt.
\end{eqnarray*}
In the same spirit as in the proof \eqref{footstep}, using H\"older inequality and the fact that $\beta = (N+1)\alpha$, we obtain
\begin{equation}
\label{I3-bis}
-I_{1} \leq - I_{3} \leq  C{\ep^{\frac{\beta \alpha}{\alpha+1}}} 
\left(\int _{T}^{T+2R^{\beta}}\!\!\!\!
\int_{| x| \leq 2 R/\ep } \!\!\!\! |x_N{\widetilde u}^{1+\alpha}(t,x)| dxdt\right)^{\frac{1}{\alpha+1}}.
\end{equation}

\medskip

To conclude, plugging  \eqref{I3-bis} into \eqref{ineg}, we get
\begin{eqnarray*}
0 \leq \int _T^{\infty}\!\!\!\!
\int_{\R^{N}}\!\!\!\! x_N |{\widetilde u}| ^{\alpha} {\widetilde u} (t,x)\zeta _R(x)\psi_R(t) dxdt &\leq& \\
&& \hskip -4cm 
|I_{2}| + C{\ep^{\frac{\beta \alpha}{\alpha+1}}} \left(\int _{T}^{T+2R^{\beta}}
\!\!\!\!\int_{|x| \leq 2 R/\ep } |x_N{\widetilde u}^{1+\alpha}(t,x)| dxdt\right)^{\frac{1}{\alpha+1}}.
\end{eqnarray*}
Letting $R\to +\infty$ yields
$$
\int _T ^{\infty}\!\!\!\!
\int _{\R^{N}}\!\! |x_N{\widetilde u}^{1+\alpha}(t,x)|  dxdt\leq C\ep^{\frac{\beta \alpha}{\alpha+1}} 
\left(\int _{T}^{\infty}\!\!\!\!
\int_{\R ^{N}} \!\!|x_N{\widetilde u}^{1+\alpha}(t,x)| dxdt\right)^{\frac{1}{\alpha+1}},
$$
which is another form of \eqref{eq:Estim-T-epsilon}, since ${\widetilde u}$ is the odd extension of $u$, and the proof of Proposition \ref{lem:Estim-T-epsilon} is complete.
\end{proof}

We are now in the position to complete the proof of Theorem \ref{th:systematic-1}.

\begin{proof}[Proof of Theorem \ref{th:systematic-1}]   If $u\not\equiv 0$ were a nonnegative global solution of \eqref{eq:1}, thanks to \eqref{eq:Estim-T-epsilon} which is valid for any $\ep > 0$ and $T > 0$, letting $\epsilon \to 0$ we would deduce that $u\equiv 0$ on $(T,+\infty)\times \R^{N}$ for any $T > 0$, which is a contradiction. 
\end{proof}

\section{Possible extinction when $\alpha>\beta/(N+1)$}\label{s:extinction}

In this section, we prove that, when $\alpha > p_F - 1 = \beta/(N+1)$, nonnegative solutions starting from small enough initial data are global and go extinct, as stated in Theorem \ref{th:global}. The proof, as the one in  \cite{Gar-Qui-10} or  \cite{Alf-fujita}, relies strongly on the rate of decay of the $L^{\infty}$ norm of the solution of the linear equation $\partial _t v=Av$, which was provided in Lemma \ref{lem:decrease}.

\begin{proof}
[Proof of Theorem \ref{th:global}] We first consider a nonnegative solution $u$ defined on $(0,T)\times \R^N_{+}$ satisfying
$$
\partial_t u = Au + u^{1+\alpha}\quad \text{ in } (0,T)\times \R^N_+,
$$
together with $u(t,x',0) = 0$ on $[0,T] \times \R^{N-1}$, and $u(0,x) = u_0(x)$ on $\R^{N}_+$. We then look after a  supersolution to this problem  in the form $z(t,x) := g(t)v(t,x)$, where $g(t) > 0$ is to be determined appropriately (with $g(0)=1$) and $v(t,x)$ is the solution of $\partial _t v = Av$ with the initial data $v(0,\cdot) = u_0$. Notice in particular that $v\geq 0$ on $(0,+\infty)\times \R^N_+$. A straightforward computation shows that in order to have
$$\partial_{t}z \geq Az + z^{\alpha +1} 
\quad \text{ in } (0,+\infty)\times \R^N_+,$$
it is enough to have
$$
\frac{g'(t)}{g^{1+\alpha}(t)}\geq \|v(t,\cdot)\|_{L^\infty}^{\alpha}.
$$
By Lemma \ref{lem:decrease}, it is therefore enough to have 
$$
\frac{g'(t)}{g^{1+\alpha}(t)} = \frac{C^{\alpha}(m_1(u_0)+\|\widehat {\widetilde{u_0}}\|_{L^1})^{\alpha}}{(1+t)^{\alpha(N+1)/\beta}}, \quad g(0)=1.
$$
Therefore, if one chooses $u_{0}$ such that (recall that here $\alpha(N+1) > \beta$ so that $\epsilon_{*} > 0$ is well defined),
$$
m_1(u_0) + \|\widehat{\widetilde {u_0}}\|_{L^1} < \epsilon_{*} := 
\frac{1}{C}\left(\frac{\alpha(N+1) - \beta}{\alpha\beta}\right)^{1/\alpha}
$$
 then the solution of the above ODE Cauchy problem 
$$
g(t) = 
\left(
1 - \frac{\beta \alpha C^{\alpha}(m_1(u_0)+\|\widehat {{\widetilde u}_0}\|_{L^1})^{\alpha}}
{\alpha(N+1)-\beta}
\left(1 - (1+t)^{1 - \frac{\alpha(N+1)}{\beta}} \right) \right)^{-1/\alpha},
$$
exists for all $t\geq 0$, is increasing and bounded as $t \to +\infty$. It therefore follows from the comparison principle that $0\leq u(t,x)\leq z(t,x) = g(t)v(t,x)\leq \|g \Vert_\infty v(t,x)$  in the half-space and thus $| {\widetilde u}(t,x)| \leq \|g\|_\infty | {\widetilde v}(t,x)|$ in the whole space. Hence the solution $u(t,x)$ of \eqref{eq:1} is global in time and, in view of Lemma \ref{lem:decrease}, satisfies estimate \eqref{extinction}. This concludes the proof of Theorem \ref{th:global}.
\end{proof}

\medskip 

\noindent{\bf Acknowledgements.}   M. Alfaro is supported by the 
ANR \textsc{i-site muse}, project \textsc{michel} 170544IA (n$^{\circ}$ ANR \textsc{idex}-0006).


\bibliographystyle{siam}  

\bibliography{biblio}

\begin{thebibliography}{10}

\bibitem{Alf-fujita}
{\sc M.~Alfaro}, {\em Fujita blow up phenomena and hair trigger effect: the
  role of dispersal tails}, Ann. Inst. H. Poincaré Anal. Non Linéaire, 34
  (2017), pp.~1309--1327.

\bibitem{Alf-Cov-17}
{\sc M.~Alfaro and J.~Coville}, {\em Propagation phenomena in monostable
  integro-differential equations: acceleration or not?}, J. Differential
  Equations, 263 (2017), pp.~5727--5758.

\bibitem{CazenaveThHaraux}
{\sc T.~Cazenave and A.~Haraux}, {\em Introduction aux Problèmes d'Évolution
  Semi-Linéaires}, Collection Mathématiques and Applications de la Société
  de Mathématiques Appliquées and Industrielles (SMAI), Éditions Ellipses,
  Paris, France, 1991.

\bibitem{Cha-Cha-Ros-06}
{\sc E.~Chasseigne, M.~Chaves, and J.~D. Rossi}, {\em Asymptotic behavior for
  nonlocal diffusion equations}, J. Math. Pures Appl. (9), 86 (2006),
  pp.~271--291.

\bibitem{Cov-Dup-07}
{\sc J.~Coville and L.~Dupaigne}, {\em On a non-local equation arising in
  population dynamics}, Proc. Roy. Soc. Edinburgh Sect. A, 137 (2007),
  pp.~727--755.

\bibitem{Dur-96}
{\sc R.~Durrett}, {\em Probability: theory and examples}, Duxbury Press,
  Belmont, CA, second~ed., 1996.

\bibitem{Fuj-66}
{\sc H.~Fujita}, {\em On the blowing up of solutions of the {C}auchy problem
  for {$u_{t}=\Delta u+u^{1+\alpha }$}}, J. Fac. Sci. Univ. Tokyo Sect. I, 13
  (1966), pp.~109--124.

\bibitem{Gar-Qui-10}
{\sc J.~Garc{\'{\i}}a-Meli{\'a}n and F.~Quir{\'o}s}, {\em Fujita exponents for
  evolution problems with nonlocal diffusion}, J. Evol. Equ., 10 (2010),
  pp.~147--161.

\bibitem{Gar-11}
{\sc J.~Garnier}, {\em Accelerating solutions in integro-differential
  equations}, SIAM J. Math. Anal., 43 (2011), pp.~1955--1974.

\bibitem{Hay-73}
{\sc K.~Hayakawa}, {\em On nonexistence of global solutions of some semilinear
  parabolic differential equations}, Proc. Japan Acad., 49 (1973),
  pp.~503--505.

\bibitem{Kav-87}
{\sc O.~Kavian}, {\em Remarks on the large time behaviour of a nonlinear
  diffusion equation}, Ann. Inst. H. Poincar\'e Anal. Non Lin\'eaire, 4 (1987),
  pp.~423--452.

\bibitem{Kob-Sir-Tun-77}
{\sc K.~Kobayashi, T.~Sirao, and H.~Tanaka}, {\em On the growing up problem for
  semilinear heat equations}, J. Math. Soc. Japan, 29 (1977), pp.~407--424.

\bibitem{Med-Kot-03}
{\sc J.~Medlock and M.~Kot}, {\em Spreading disease: integro-differential
  equations old and new}, Math. Biosci., 184 (2003), pp.~201--222.

\bibitem{Sug-75}
{\sc S.~Sugitani}, {\em On nonexistence of global solutions for some nonlinear
  integral equations}, Osaka J. Math., 12 (1975), pp.~45--51.

\bibitem{Wei-81}
{\sc F.~B. Weissler}, {\em Existence and nonexistence of global solutions for a
  semilinear heat equation}, Israel J. Math., 38 (1981), pp.~29--40.

\end{thebibliography}

\end{document}